\newcommand\R{\mathbb{R}}
\newcommand\C{\mathbb{C}}
\newcommand\LL{\mathcal{L}}
\newcommand\CC{\mathcal{C}}
\numberwithin{equation}{section}
\newtheorem{lemma}{Lemma}[section]
\newtheorem{theorem}{Theorem}[section]
\newtheorem{remark}{Remark}[section]
\begin{document}
\title[Uniform resolvent and Sobolev estimates]{Uniform resolvent estimates for Schr\"odinger operator with an inverse-square potential}

\author{Haruya Mizutani}
\address{Department of Mathematics, Graduate School of Science, Osaka University, Toyonaka, Osaka 560-0043, Japan.}
\email{haruya@math.sci.osaka-u.ac.jp}

\author{Junyong Zhang}
\address{Department of Mathematics, Beijing Institute of Technology, Beijing 100081, China;  Department of Mathematics, Cardiff University, UK.}
\email{zhang\_junyong@bit.edu.cn; ZhangJ107@cardiff.ac.uk}

\author{Jiqiang Zheng}
\address{Institute of Applied Physics and Computational Mathematics, Beijing100088, China.}
\email{zhengjiqiang@gmail.com}

\maketitle


\begin{abstract}
We study the uniform resolvent estimates for Schr\"odinger operator with a Hardy-type singular potential.
 Let $\mathcal{L}_V=-\Delta+V(x)$ where $\Delta$ is the usual Laplacian on $\R^n$ and $V(x)=V_0(\theta) r^{-2}$ where $r=|x|, \theta=x/|x|$ and $V_0(\theta)\in\CC^1(\mathbb{S}^{n-1})$ is a real function such that the operator $-\Delta_\theta+V_0(\theta)+(n-2)^2/4$ is a strictly positive operator on $L^2(\mathbb{S}^{n-1})$. We prove some new uniform weighted resolvent estimates and also obtain some uniform Sobolev estimates associated with the operator $\mathcal{L}_V$.

\end{abstract}

\begin{center}
 \begin{minipage}{120mm}
   { \small {\bf Key Words:  Uniform resolvent estimate, inhomogeneous Strichartz estimate, Sobolev inequality, inverse-square potential}
      {}
   }\\
    { \small {\bf AMS Classification:}
      { 42B37, 35Q40, 47J35.}
      }
 \end{minipage}
 \end{center}

\maketitle 

\section{Introduction and main results}
In this paper, we study the uniform resolvent estimates and their applications to the Sobloev inequalities and to the global-in-time inhomogeneous Strichartz estimates with non-admissible pairs. Consider the Schr\"odinger operator
\begin{equation}\label{ope}\LL_V=-\Delta+V(x)\end{equation}
on $L^2(\R^n)$ with $n\geq 3$ where the operator  $\Delta$ is the usual Laplacian on $\R^n$ and the potential $V(x)=V_0(\theta) r^{-2}$ with $r=|x|, \theta=x/|x|$ and $V_0(\theta)\in\CC^1(\mathbb{S}^{n-1})$ is a real function.
The inverse-square potential is a typical example of critical decaying potentials, which is on a borderline for the validity of the resolvent and Strichartz estimates; we refer to \cite{D,GVV}. \vspace{0.2cm}

This paper is motivated by recent work of Bouclet and the first author \cite{BM1} and the first author \cite{Miz} in which the effect of  decaying potentials in uniform resolvent estimates and
global-in-time Strichartz estimates were investigated. In \cite{BM1}, the weighted resolvent estimates $\|w(\LL_V-z)^{-1}w^*\|_{L^2\to L^2}$ uniformly in $z$ were proved to hold with $w$ being a large class of weight functions in Morrey-Campanato spaces.
The full set of global-in-time Strichartz estimates including the endpoint case was also obtained in
 \cite{BM1}, but non-admissible inhomogeneous cases were not considered there.
 The class of potentials we consider here includes the inverse-square type potentials.
In \cite{Miz}, the uniform Sobolev estimates for the resolvent were proved under the assumption that zero energy is neither an eigenvalue nor a resonance in a suitable sense for the operator $\LL_V$.
The first author also proved global-in-time inhomogeneous Strichartz estimates hold for some non-admissible pairs. But one needs the requirement that $V\in L^{\frac n2}(\R^n)$ with $n\geq3$ which is not satisfied by the
inverse-square potential. In light of this observation, the purpose of this paper is to study the uniform resolvent estimates, the Sobolev inequalities and the non-admissible inhomogeneous Strichartz estimates which are associated with Schr\"odinger operator with 
an inverse-square decaying potential. \vspace{0.2cm}

The uniform resolvent estimates play a fundamental role in the establishment of time-decay estimates or Strichartz estimates, see \cite{JSS, Kato, RS}.
When the potential $V$ is smooth enough and decays sufficiently fast at infinity,  for example $V$ belongs to  Kato class (see \cite{RS}),  there is a number of  literature on the resolvent estimates of the Schr\"odinger operator with potentials and their applications to global-in-time dispersive estimates, such as time-decay estimates, or Strichartz estimates, in the past decades; see e.g. \cite{GS, JK,RT}  for the resolvent estimates; \cite{BD,BG,DFVV, G, FFFP} for the dispersive and Strichartz estimates and the references therein.\vspace{0.2cm}

In this paper, as mentioned above, we focus on the Schr\"odinger operator $\mathcal L_V$ given in \eqref{ope} which appears frequently in mathematics and physics. The study of the operator is connected with the combustion theory to the Dirac equation with Coulomb potential, and the study of perturbations of classic space-time metrics such as Schwarzschild and Reissner--Nordstr\"om; see  \cite{BPSS,BPST,PSS, PSS1,KSWW,VZ} and the references therein.

The Strichartz estimates and time-decay estimates for the dispersive equations with an inverse-square potential were  studied in \cite{BPSS, BPST, PSS, PSS1}.
In particular, Burq et al. \cite{BPST} established the weighted uniform resolvent estimate
\begin{equation}\label{uni-resolvent}
\||x|^{-1}(\LL_V-\sigma)^{-1}|x|^{-1}\|_{L^{2}(\R^n)\to L^2(\R^n)}\leq C,
\end{equation}
and then they used it to prove the full set of the Strichartz estimates excluding the double-endpoint inhomogeneous estimates which were proved in \cite{BM1} later.
To prove the inhomogeneous Strichartz estimates for non-admissible pairs and to obtain more Sobolev inequality, the above uniform resolvent estimate \eqref{uni-resolvent} is not enough. For our purpose, we have to generalize \eqref{uni-resolvent} to \eqref{in-local-sm1}
stated below in our first result.\vspace{0.2cm}

Before stating our first result, we introduce some notation.  Let $\nu_0>0$ be the positive square root \footnote{ To ensure $\nu_0>0$, it is enough to choose $V_0(\theta)$ such that $-\Delta_\theta+V_0(\theta)+(n-2)^2/4$ is a strictly positive operator on $L^2(\mathbb{S}^{n-1})$. For example, one can take $V_0(\theta)\geq a$ where $a>-(n-2)^2/4$ to guarantee $\nu_0>0$. } of the smallest eigenvalue of the operator
 $-\Delta_\theta+V_0(\theta)+(n-2)^2/4$ where $\Delta_\theta$ is the usual Laplacian on the sphere $\mathbb{S}^{n-1}$.  We define the interval $R_{\nu_0}\subset \R$ depending on $\nu_0$ by
 \begin{equation}\label{Rv} R_{\nu_0}=
 \begin{cases} (\frac12,\frac32), \quad \nu_0>1/2;\\
 (1-\frac{\nu_0^2}{1-2\nu_0^2}, 1+\frac{\nu_0^2}{1-2\nu_0^2}),\quad 0<\nu_0\leq1/2.
 \end{cases}
 \end{equation}

\begin{theorem}[Weighted resolvent estimates]\label{thm:w-resolvent}
Let $n\geq 3$ and let $\mathcal{L}_V$ be the operator on $L^2(\R^n)$ in \eqref{ope}.  Suppose the real function $V_0(\theta):=r^2V(x)\in\CC^1(\mathbb{S}^{n-1})$ and
the smallest eigenvalue of the operator $-\Delta_\theta+V_0(\theta)+(n-2)^2/4$ on $L^2(\mathbb{S}^{n-1})$ is $\nu_0^2>0$.  Let $\alpha\in R_{\nu_0}$ be defined in \eqref{Rv}.
Then  there exists a constant $C$ such that the uniform weighted resolvent estimates
hold
\begin{equation}\label{in-local-sm1}
\sup_{\sigma\notin \R^+} \|r^{-\alpha}(\LL_V-\sigma)^{-1}r^{-2+\alpha}f\|_{L^{2}(\R^n)}\leq C\|f\|_{L^{2}(\R^n)},\quad f\in\mathcal{C}_0^\infty(\R^n).
\end{equation}
\end{theorem}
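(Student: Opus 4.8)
The plan is to reduce the operator $\LL_V$ to its separated-variables form and exploit the exact knowledge of the radial model. Since $V = V_0(\theta) r^{-2}$, the operator $\LL_V$ is homogeneous of degree $-2$ and commutes with the decomposition of $L^2(\R^n)$ into spherical harmonics adapted to $A := -\Delta_\theta + V_0(\theta)$. Writing $\{\mu_k\}$ for the eigenvalues of $A + (n-2)^2/4$ and $\nu_k = \sqrt{\mu_k} \geq \nu_0 > 0$, on each eigenspace the operator $\LL_V$ acts as the Bessel-type operator
\begin{equation}\label{bessel-model}
  L_{\nu_k} = -\partial_r^2 - \frac{n-1}{r}\partial_r + \frac{\nu_k^2 - (n-2)^2/4}{r^2}
\end{equation}
on $L^2(r^{n-1}\,dr)$. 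The key point is that \eqref{in-local-sm1} holds for $\LL_V$ if and only if it holds uniformly in $k$ for each $L_{\nu_k}$, with a constant independent of $k$; the weights $r^{-\alpha}$ and $r^{-2+\alpha}$ are purely radial and so respect the orthogonal decomposition.

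First I would establish the model estimate for $L_{\nu}$ for a single $\nu \geq \nu_0$. By scaling in $r$, the resolvent bound $\|r^{-\alpha}(L_\nu - \sigma)^{-1} r^{-2+\alpha}\|_{L^2 \to L^2} \leq C$ is scale-invariant, so it suffices to treat $\sigma$ on the unit circle, or equivalently — after a further homogeneity argument — to bound the kernel of $r^{-\alpha}(L_\nu - (1\pm i0))^{-1} r^{-2+\alpha}$. The resolvent kernel of $L_\nu$ is explicit in terms of Bessel functions $J_{\tilde\nu}$, $Y_{\tilde\nu}$ (with order $\tilde\nu = \nu$, since $\nu_k^2 - (n-2)^2/4 + (n-2)^2/4 = \nu_k^2$), namely a product of a Bessel and a Hankel function of the two radial variables $r, s$. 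The weighted $L^2 \to L^2$ bound is then a Schur-test / fractional-integration estimate on this kernel: one splits into the regions $r < s$ and $r > s$, uses the small-argument asymptotics $J_{\tilde\nu}(t) \sim t^{\tilde\nu}$, $Y_{\tilde\nu}(t) \sim t^{-\tilde\nu}$ and the large-argument oscillatory decay $\sim t^{-1/2}$, and checks that the resulting kernel is dominated by a homogeneous kernel of degree $-n$ to which the Hardy–Littlewood–Sobolev or Stein–Weiss inequality applies. The condition $\alpha \in R_{\nu_0}$ is precisely what makes the relevant exponents lie in the admissible range where this singular-integral bound holds; in particular one needs $\frac{n}{2} - \alpha$ and its conjugate to avoid the endpoints dictated by the $r^{-2}$ scaling and by the Bessel order $\nu$.

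The main obstacle — and the reason the hypothesis $V_0 \in \CC^1(\mathbb{S}^{n-1})$ enters rather than mere $L^\infty$ — is uniformity in $k$ as $\nu_k \to \infty$. For large $\nu$ the Bessel functions develop a turning point near $r \sim \nu/\sqrt{|\sigma|}$, and the naive small/large argument asymptotics are not uniform; one must instead invoke uniform (Langer/Olver) asymptotics for $J_\nu, Y_\nu$ with large order, or else an ODE/commutator argument showing that the high-$k$ pieces contribute with a constant that in fact \emph{improves} (decays) in $k$. I expect the cleanest route is: (i) prove the bound for all $\nu \geq \nu_0$ with a constant depending only on $\nu_0$ by a careful but routine kernel estimate using Olver's uniform asymptotics, observing the effective weight gains a factor that is summable/bounded in $k$; and (ii) sum over $k$ using the orthogonality. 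The $\CC^1$ regularity of $V_0$ guarantees that $A$ has discrete spectrum with Weyl asymptotics $\mu_k \sim k^{2/(n-1)}$, so that the reduction is legitimate and no continuous-spectrum issues on the sphere arise. A secondary technical point is handling $\sigma$ near $0$ and near $+\infty$ uniformly, but both are absorbed by the exact scaling $L_\nu(r) \mapsto \lambda^2 L_\nu(\lambda r)$ together with the scale-invariance of the weighted norm in \eqref{in-local-sm1}.
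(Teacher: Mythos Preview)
Your approach---spectral decomposition in the angular variable followed by explicit Bessel/Hankel kernel estimates---is genuinely different from the paper's proof. The paper does \emph{not} separate variables. Following the multiplier method of \cite{BPST}, it sets $v = r^{(n-1)/2} e^{rz} u$ with $z = \sqrt{-\sigma}$, multiplies the resulting equation by $r^\beta e^{-2r\tau}\phi(r)\,\partial_r\bar v$ (the novelty over \cite{BPST} being the exponent $\beta = 3-2\alpha$ in place of $\beta = 1$), and integrates by parts; the angular operator is never diagonalized but controlled globally via the single lower bound $\int_{\mathbb{S}^{n-1}}(|\nabla_\theta v|^2 + (V_0 + (n-2)^2/4)|v|^2)\,d\theta \ge \nu_0^2 \int_{\mathbb{S}^{n-1}} |v|^2\,d\theta$. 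The interval $R_{\nu_0}$ then emerges from a weighted one-dimensional Hardy inequality with the specific weight $w(r) = e^{-r\tau} r^{(\beta-1)/2}(\beta+2\tau r)^{1/2}$, whose hypotheses $w\ge 0$, $w'\le 0$, and $r((w')^2 + 2ww'') \ge 2ww'$ are verified by hand. Your kernel route avoids this machinery and is arguably more transparent for a scale-invariant operator; carried out carefully (conjugated Green's function $\sim r_<^{\nu+1/2} r_>^{1/2-\nu}$ near the origin, bounded oscillatory at infinity), the Schur/Hardy test yields $|\alpha-1| < \nu_0$ from the origin and $|\alpha-1| < 1/2$ from infinity, hence the range $|\alpha-1| < \min(\nu_0, 1/2)$---which for $\nu_0 < 1/2$ is \emph{strictly larger} than the paper's $R_{\nu_0}$.

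Two points in your outline need correction, though neither is fatal. First, the hard case for uniformity in $k$ is $\nu_k = \nu_0$, not $\nu_k \to \infty$: the repulsive term $(\nu_k^2 - 1/4)/r^2$ only helps as $\nu_k$ grows, so the constraint $|\alpha-1| < \nu_k$ is least restrictive at $k=0$ and no Langer/Olver machinery is needed. Second, the $\CC^1$ hypothesis on $V_0$ is not what yields discrete spectrum of $A$ (compactness of $\mathbb{S}^{n-1}$ gives that for any bounded $V_0$); in the paper it is invoked only so that $u$ is a classical solution and the various integrations by parts are justified. You should also actually carry out the kernel bound rather than assert that Stein--Weiss ``precisely'' gives $R_{\nu_0}$---as noted, it gives something a little different.
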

\begin{remark} This is a generalization of \cite [Theorem 2.1]{BPST} in which they proved \eqref{in-local-sm1} with $\alpha=1$ .
The smallest eigenvalue $\nu_0^2$ plays an important role in \eqref{in-local-sm1}.
\end{remark}

\begin{remark} Let $\LL_V=-\Delta_g+V$ be defined  on a manifold and $\langle x\rangle=(1+|x|^2)^{1/2}$.  On the asymptotically Euclidean space, Bony-H\"afner \cite{BH} proved the resolvent estimates
at low frequency
\begin{equation*}
\|\langle x\rangle^{-\alpha}(\mathcal{L}_V-\sigma)^{-1} \langle x\rangle^{-\beta}\|_{L^2\to L^2}\leq C, \quad |\sigma|\leq1
\end{equation*}
provided $\alpha,\beta>1/2$ and $\alpha+\beta>2$ when $V=0$.
On the asymptotically conic manifold, Bouclet-Royer \cite{BR} showed the sharp resolvent estimate at low frequency
\begin{equation*}
\|\langle x\rangle^{-1}(\mathcal{L}_V-\sigma)^{-1} \langle x\rangle^{-1}\|_{L^2\to L^2}\leq C, \quad |\sigma|\leq 1
\end{equation*}
 when $V=0$. The last two authors \cite{ZZ1} extended this estimate with a decaying $O(\langle x\rangle^{-2})$ potential such that the operator $\LL_V$ has no nonpositive eigenvalues or zero-resonance.
 The result here is on Euclidean space but with flexible weights such as $|x|^{-\alpha}$ and also includes the high frequency estimates.
\end{remark}

\begin{remark} One can use the same argument to derive the similar resolvent estimates \eqref{in-local-sm1} on a metric cone  as the last two authors did in \cite{ZZ}. 
It would be interesting to show a similar result of Theorem \ref{thm-in} below for Schr\"odinger operator $\LL_V$ on the metric cone, for which the last two authors proved the Strichartz estimates in \cite{ZZ,ZZ2}.
But there is an obstacle to obtain \eqref{unf-sob} below on the metric cone since the metric of section cross is so general that the conjugated points could appear. The difficulties arise from the conjugated points.
\end{remark}

When $V\equiv0$, the following uniform Sobolev inequality was proved by Kenig-Ruiz-Sogge \cite{KRS} and Guti\'errez \cite{Gut}:
\begin{equation}
\label{uniform_Sobolev}
\left\|(-\Delta-\sigma)^{-1}f\right\|_{L^{q,2} (\R^n)}\leq
C|\sigma|^{\frac n2(\frac1p-\frac1q)-1}\|f\|_{L^{p,2}(\R^n)},\quad \sigma\notin \R^+,\ f\in\mathcal{C}_0^\infty(\R^n),
\end{equation}
where $n\ge3$ and $(p,q)$ satisfies
\begin{align}
\label{p_q_0}
\frac{2}{n+1}\le \frac1p-\frac1q\le \frac2n,\ \frac{2n}{n+3}<p<\frac{2n}{n+1},\ \frac{2n}{n-1}<q<\frac{2n}{n-3},
\end{align}
and $L^{q,r}(\R^n)$ is the usual Lorentz space. Precisely speaking, they proved \eqref{uniform_Sobolev} with $L^{p,2},L^{q,2}$ replaced by $L^p,L^q$, respectively. However, \eqref{uniform_Sobolev} is an immediate consequence of their results and real interpolation theory. Note that the condition \eqref{p_q_0} is known to be sharp (see \cite{Gut}). It is also worth noting that the uniform Sobolev inequality is a powerful tool in spectral and scattering theory for Schr\"odinger equations (see \cite{IoSc, KRS}), as well as nonlinear elliptic equations such as the Ginzburg-Landau equation (see \cite{Gut}).

As a second result, we extend \eqref{uniform_Sobolev} to the operator $\LL_V$. Let us set $$
\mu_0=
\begin{cases}
1/2,&\nu_0\ge1/2;\\\frac{\nu_0^2}{1-2\nu_0^2},&0<\nu_0<1/2.
\end{cases}
$$

\begin{theorem}[Uniform Sobolev inequality]\label{thm-unf-sob} Let $\mathcal L_V$ be given as above and suppose  \begin{equation}\label{p_q}
\frac{2}{n+1}\le \frac1p-\frac1q\le \frac2n,\ \frac{2n}{n+2(1+\mu_0)}<p<\frac{2n}{n+1},\ \frac{2n}{n-1}<q<\frac{2n}{n-2(1+\mu_0)}.
\end{equation}
Then there exists a positive constant $C$ such that
\begin{equation}\label{unf-sob}
\left\|(\LL_V-\sigma)^{-1}f\right\|_{L^{q,2} (\R^n)}\leq
C|\sigma|^{\frac n2(\frac1p-\frac1q)-1}\|f\|_{L^{p,2}(\R^n)},\ \sigma\notin \R^+,\ f\in\mathcal{C}_0^\infty(\R^n).
\end{equation}
\end{theorem}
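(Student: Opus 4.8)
The plan is to derive the uniform Sobolev inequality \eqref{unf-sob} from the weighted resolvent estimate \eqref{in-local-sm1} of Theorem \ref{thm:w-resolvent} combined with the corresponding estimates for the free resolvent and a perturbative argument. First I would reduce to the case $|\sigma|=1$ by the scaling $x\mapsto |\sigma|^{-1/2}x$, which is compatible with the homogeneity of both $-\Delta$ and the inverse-square potential $V$; this removes the factor $|\sigma|^{\frac n2(\frac1p-\frac1q)-1}$ and leaves us to prove $\|(\LL_V-\sigma)^{-1}f\|_{L^{q,2}}\lesssim\|f\|_{L^{p,2}}$ uniformly for $|\sigma|=1$, $\sigma\notin\R^+$. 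Next I would write the resolvent identity in the symmetrized form
\begin{equation*}
(\LL_V-\sigma)^{-1}=(-\Delta-\sigma)^{-1}-(-\Delta-\sigma)^{-1}r^{-1}\bigl(\Id+r^{-1}(\LL_V-\sigma)^{-1}r^{-1}\bigr)r^{-1}(-\Delta-\sigma)^{-1},
\end{equation*}
wait—more precisely, since $V=V_0(\theta)r^{-2}=r^{-1}(V_0(\theta)r^{-2})^{1/2}\cdot(\cdots)$ is not sign-definite, I would instead factor $V = (r^{-1})\, (r^2 V)\,(r^{-1})$ and use
\begin{equation*}
(\LL_V-\sigma)^{-1}=(-\Delta-\sigma)^{-1}-(-\Delta-\sigma)^{-1}r^{-1}\, W\, r^{-1}(\LL_V-\sigma)^{-1},\qquad W:=V_0(\theta),
\end{equation*}
and then, iterating once more to symmetrize, express $(\LL_V-\sigma)^{-1}$ in terms of $r^{-1}(\LL_V-\sigma)^{-1}r^{-1}$ sandwiched between two copies of $(-\Delta-\sigma)^{-1}r^{-1}$.

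The analytic heart of the argument is therefore to establish the two one-sided mapping properties
\begin{equation*}
\|(-\Delta-\sigma)^{-1}r^{-\alpha}g\|_{L^{q,2}(\R^n)}\lesssim\|g\|_{L^2(\R^n)},\qquad
\|r^{-\alpha}(-\Delta-\sigma)^{-1}f\|_{L^2(\R^n)}\lesssim\|f\|_{L^{p,2}(\R^n)}
\end{equation*}
for $|\sigma|=1$ and a suitable exponent $\alpha=\alpha(p,q)$ lying in the admissible interval $R_{\nu_0}$; by duality these two are equivalent, so only one need be proven. These are in turn consequences of the Kenig–Ruiz–Sogge/Guti\'errez bound \eqref{uniform_Sobolev} for the free resolvent together with the Hölder inequality in Lorentz spaces, since $r^{-\alpha}$ lies in the weak-$L^{n/\alpha}$ space $L^{n/\alpha,\infty}$; the constraint $2/(n+1)\le 1/p-1/q\le 2/n$ and the ranges of $p,q$ in \eqref{p_q} are exactly what is needed so that the product exponent arithmetic closes and so that the required $\alpha$ can be chosen inside $R_{\nu_0}$ (this is where the numbers $\mu_0$ and the endpoints $\tfrac{2n}{n+2(1+\mu_0)}$, $\tfrac{2n}{n-2(1+\mu_0)}$ enter: they are precisely the ranges of $p,q$ for which some $\alpha\in R_{\nu_0}$ with $\alpha = \tfrac n2 - \tfrac nq$-type relations is available). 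Once these free-resolvent-with-weights bounds are in hand, combining them with \eqref{in-local-sm1} in the symmetrized resolvent identity gives $\|(\LL_V-\sigma)^{-1}f\|_{L^{q,2}}\lesssim \|f\|_{L^{p,2}}$ for $|\sigma|=1$, and scaling back finishes the proof.

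I expect the main obstacle to be the bookkeeping of exponents: one must verify that for every $(p,q)$ satisfying \eqref{p_q} there is a single weight exponent $\alpha\in R_{\nu_0}$ (equivalently a splitting $\alpha$ and $2-\alpha$ of the total weight) for which \emph{both} weighted free-resolvent estimates above hold simultaneously with $L^{p,2}\to L^2$ and $L^2\to L^{q,2}$ bounds. This requires matching the Lorentz-Hölder arithmetic — $\frac1p = \frac12 + \frac\alpha n - \delta$ type relations coming from Sobolev embedding for $(-\Delta-\sigma)^{-1}$ with a loss measured by $1/p-1/q$ — against the interval constraints on $\alpha$, and checking that the endpoints in \eqref{p_q} are the sharp thresholds. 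A secondary technical point is the treatment of the borderline cases $\frac1p-\frac1q=\frac2{n+1}$ and $=\frac2n$, where one should use the Lorentz-space refinement of \eqref{uniform_Sobolev} (already recorded in the excerpt) rather than the plain $L^p\to L^q$ bound, together with real interpolation, so as not to lose the endpoint. The positivity hypothesis on $-\Delta_\theta+V_0(\theta)+(n-2)^2/4$, i.e. $\nu_0>0$, is used only through Theorem \ref{thm:w-resolvent} and guarantees that $R_{\nu_0}$ is a nonempty interval around $1$, so the perturbative scheme is never vacuous.
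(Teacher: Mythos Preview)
Your proposal is essentially the paper's proof: both use the iterated resolvent identity $\mathcal R=\mathcal R_0-\mathcal R_0V\mathcal R_0+\mathcal R_0V\mathcal R V\mathcal R_0$, the free Kenig--Ruiz--Sogge/Guti\'errez bound together with H\"older in Lorentz spaces (via $V\in L^{n/2,\infty}$ and $r^{-\alpha}\in L^{n/\alpha,\infty}$) for the outer factors, and the weighted estimate \eqref{in-local-sm1} for the central $r^{-\alpha}\mathcal R(\sigma)r^{-(2-\alpha)}$. The only differences are organizational: the paper does not scale to $|\sigma|=1$ but tracks the powers of $|\sigma|$ directly, and it first settles the line $1/p-1/q=2/n$ (where the weight exponent is forced to be $s=n(1/2-1/q)$) and then bootstraps the remaining $(p,q)$ through an auxiliary pair $(p_0,q_0)$ on that line, thereby sidestepping the exponent bookkeeping you anticipate. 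One small correction: your two displayed one-sided bounds should carry \emph{asymmetric} weights $r^{-(2-\alpha)}$ and $r^{-\alpha}$ (as you in fact note later), and they are dual to each other only when $1/p+1/q=1$; both must be proved separately in general.
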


 \begin{remark}
When $\nu_0\ge 1/2$, \eqref{p_q} coincides with \eqref{p_q_0} (see Figure \ref{figure_1} below) in which case Theorem \ref{thm-unf-sob} gives the full range of uniform Sobolev inequalities for $\mathcal L_V$. Uniform Sobolev inequalities for Schr\"odinger operators have been recently studied in several papers. Bouclet and the first author \cite{BM1} and the first author \cite{Miz1} showed \eqref{unf-sob} for $\LL_V$ under \eqref{p_q} and $1/p+1/q=1$. For the special case $(p,q)=(\frac{2n}{n+2},\frac{2n}{n-2})$, Guillarmou and Hassell \cite{GH} showed such estimates to the Laplace operator on nontrapping asymptotically conic manifolds, and Hassell and the second author \cite{HZ} extended it to potential perturbations with smooth potentials decaying at infinity like $\langle x\rangle^{-3}$ and without 0 resonance or eigenvalue. Compared with these results, we here prove more results ($p,q$ may not be dual each other) on $\R^n$ for potentials with weaker decay at infinity and critical singularity at the origin.
 \end{remark}

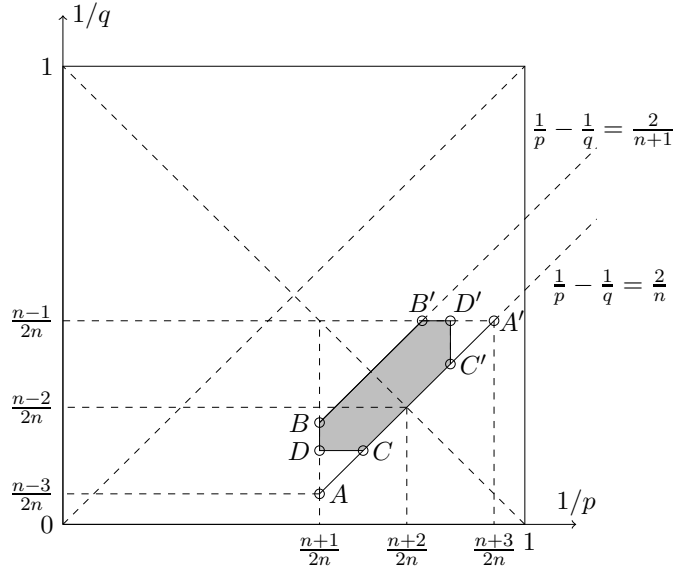
\begin{figure}[htbp]
\label{figure_1}
\begin{center}
\scalebox{0.9}[0.9]{
\begin{tikzpicture}

\draw (0,0) rectangle (6.75,6.75);
\draw[->]  (0,0) -- (0,7.5);
\draw[->]  (0,0) -- (7.5,0);
\filldraw[fill=gray!50](4.3875,1.0875)--(3.75,1.0875)--(3.75,1.5)--(5.25,3)--(5.6625,3)--(5.6625,2.3625); 
\draw (6.75,0) node[below] {$\ 1$};
\draw (0,6.75) node[left] {$1$};
\draw (0,0) node[below, left] {$0$};
\draw (7.5,0) node[above] {$1/p$};
\draw (0,7.5) node[right] {$1/q$};
\draw[dashed] (0,0) -- (6.75,6.75);
\draw[dashed] (6.75,0) -- (0,6.75);
\draw[dashed] (6.3,3) -- (7.8,4.5);
\draw (7,3.5) node[right] {$\frac1p-\frac1q=\frac 2n$};
\draw[dashed] (5.25,3) -- (7.8,5.55);
\draw (7.9,5.4) node[above] {$\frac1p-\frac1q=\frac{2}{n+1}$};
\draw[dashed] (3.75,1.5) -- (3.75,3.0) -- (5.25,3.0);
\draw[dashed] (3.75,0.45) -- (0,0.45);
\draw[dashed] (6.3,3) -- (6.3,0);
\draw (3.75,0.45) circle (2pt) node[right] {$A$};    
\draw (6.3,3)  circle (2pt) node[right] {$\!A'$}; 
\draw (3.75,1.5)  circle (2pt) node[left] {$B$};   
\draw (5.25,3)  circle (2pt) node[above] {$B'$}; 
\draw (4.3875,1.0875)  circle (2pt) node[right] {$C$}; 
\draw (5.6625,2.3625) circle (2pt) node[right] {$C'$};    
\draw (3.75,1.0875) circle (2pt) node[left] {$D$};    
\draw (5.6625,3) circle (2pt) node[above] {$\ \ \ \ D'$};    
\draw (3.75,0.45) -- (6.3,3); 
\draw[dashed] (3.75,0.45) -- (3.75,1.5) ; 
\draw[dashed] (5.25,3) -- (6.3,3) ; 
\draw (3.75,1.5) -- (5.25,3); 
\draw[dashed] (3.75,0.45) -- (3.75,0);
\draw (3.75,0) node[below] {$\frac{n+1}{2n}$};
\draw[dashed] (5.025,1.725) -- (5.025,0) node[below] {$\frac{n+2}{2n}$};
\draw[dashed] (5.6625,2.3625) -- (5.6625,3);
\draw(6.3,0) node[below]  {$\frac{n+3}{2n}$};
\draw (0,3) node[left] {$\frac{n-1}{2n}$};
\draw[dashed] (5.025,1.725) -- (0,1.725) node[left] {$\frac{n-2}{2n}$};
\draw (0,0.45) node[left] {$\frac{n-3}{2n}$};
\draw[dashed] (0,3) -- (3.75,3.0);
\end{tikzpicture}
}
\end{center}
\caption{The condition \eqref{p_q_0} corresponds to the trapezium $ABB'A'$ with two closed line segments $\overline{AB}$, $\overline{B'A'}$ removed, while the condition \eqref{p_q} with $\mu_0<1/2$ corresponds to the shaded region surrounded by the polygon $CDBB'D'C'$ with 4 closed line segments $CD$, $DB$, $B'D'$ and $D'C'$ removed. Here $A=(\frac{n+1}{2n},\frac{n-3}{2n})$, $B=(\frac{n+1}{2n},\frac{n^2-3n+1}{2n(n+1)})$, $C=(\frac{n+2\mu_0}{2n},\frac{n-2(1+\mu_0)}{2n})$, $D=(\frac{n+1}{2n},\frac{n-2(1+\mu_0)}{2n})$ and $A',B',C',D'$ are dual points of $A,B,C,D$, respectively.}
\end{figure}

Finally we  state the result about inhomogeneous Strichartz estimates for non-admissible pairs.
Before stating the result, we recall the background of the Strichartz estimates without potential. Consider the Cauchy problem for the  inhomogeneous Schr\"odinger equation
\begin{equation}
\begin{cases}
i\partial_t u+\Delta u=F(t,x),\quad t\in\R, x\in\R^n;\\
u(0)=u_0(x).
\end{cases}
\end{equation}
By Duhamel's formula, the solution $u$ is given by
\begin{equation}
u(t)=e^{it\Delta}u_0-i\int_0^t e^{i(t-s)\Delta} F(s)ds.
\end{equation}
R. Strichartz \cite{Str} in 1977 proved that there exists a constant $C$ such that
\begin{equation}
\|u(t)\|_{L^q(\R;L^r(\R^n))}\leq C\left(\|u_0\|_{L^2}+\|F\|_{L^{\tilde{q}'}(\R; L^{\tilde{r}'}(\R^n))}\right)
\end{equation}
with $q=r=\tilde{q}=\tilde{r}=2(n+2)/n$ when $u_0\in L^2(\R^n),  F\in L^{\tilde{q}'}(\R; L^{\tilde{r}'}(\R^n))$. From then,
there are many works devoted to this type of a priori estimates, so called the \emph{Strichartz estimate}, for solutions to the Schr\"odinger equation
in which $q$ is possibly not equal to the exponent $r$; we refer the readers to \cite{GV, KT} and the references therein.  The Strichartz estimates have been used to prove rich results on the
well-posed theory and nonlinear scattering theory for the semi-linear Schr\"odinger equations on Euclidean space, for example, see \cite{GV,Tbook} and the references therein.\vspace{0.2cm}

In particular, if $F=0$,  the Strichartz estimate becomes
\begin{equation}\label{h-str}
\left\|e^{it\Delta}u_0\right\|_{L^q(\R;L^r(\R^n))}\leq C\|u_0\|_{L^2}
\end{equation}
and if $u_0=0$, then
\begin{equation}\label{inh-str}
\left\|\int_0^t e^{i(t-s)\Delta} F(s)ds\right\|_{L^q(\R;L^r(\R^n))}\leq C\|F\|_{L^{\tilde{q}'}(\R; L^{\tilde{r}'}(\R^n))}.
\end{equation}
The first one is known as a \emph{homogeneous Strichartz estimate} and the second one is called  \emph{inhomogeneous Strichartz estimate}.
If $(q,r)$ satisfies
\begin{equation}\label{admissible}
q,r\in [2,\infty], \quad 2/q=n(1/2-1/r), \quad (q,r,n)\neq (2,\infty, 2),
\end{equation} we say $(q,r)$ is a Schr\"odinger admissible pair, denoted by $(q,r)\in \Lambda_0$.  From \cite{KT}, the homogeneous estimate \eqref{h-str} holds if and only if  $(q,r)\in \Lambda_0$. But there are some differences for the inhomogeneous estimates.
It has been known that if both $(q,r)$ and $(\tilde{q},\tilde{r})$ are admissible pairs, the inhomogeneous estimate \eqref{inh-str} holds.
Furthermore, it is known that there exist the exponent pairs $(q,r)$ and $(\tilde{q},\tilde{r})$ which do not satisfy the admissible condition, but the inhomogeneous estimate can still be valid; we refer the reader to
 T. Cazenave and F. Weissler \cite{CW} and T. Kato \cite{Kato} for Schr\"odinger and to  Harmse \cite{Harmse} and Oberlin \cite{Oberlin} for wave with $q=r$.  After that,
 D. Foschi \cite{Foschi} and M. Vilela \cite{Vilela} independently and greatly extended the range of the exponent pairs $(q,r)$ and $(\tilde{q},\tilde{r})$ for
 which the inhomogeneous Strichartz estimate holds. R. J. Taggart \cite{Taggart} generalized the inhomogeneous Strichartz estimate in an abstract mechanism. For more results on the inhomogeneous Strichartz estimate,
 we refer to Y. Koh \cite{Koh} and R. Schippa \cite{Schippa}. However, the problem of finding all possible exponents pairs $(q,r)$ such that the inhomogeneous estimate \eqref{inh-str} is available remains open. \vspace{0.2cm}

 It is worth remarking here that  the argument is based on the method introduced in Keel-Tao \cite{KT} and
 most of the inhomogeneous Strichartz estimates are established there under the assumption that the propagator satisfies the energy estimate
 \begin{equation}\label{L2-est}
 \|U(t)\|_{L^2\to L^2}\leq C
 \end{equation}
  and the dispersive estimate
   \begin{equation}\label{dis-est}
 \|U(t)U^*(s)\|_{L^1\to L^\infty}\leq C|t-s|^{-\sigma}, \quad t\neq s.
 \end{equation}
 In particular, for the Sch\"odinger operator without potential, $U(t)=e^{it\Delta}$ and $\sigma=n/2$. It is known that the Strichartz estimate still holds when the pairs $(q,r)$ and $(\tilde{q},\tilde{r})$ are admissible pairs
 even though the dispersive estimate \eqref{dis-est} fails. For example, Burq et al. \cite{BPSS} proved the Strichartz estimates for the operator $-\Delta+a|x|^{-2}$ on $\R^n$ with $a>-(n-2)^2/4$ and $n\geq3$,  but the dispersive estimate fails due to the negative inverse-square potential, e.g. see \cite{FFFP,PSS1}; and the
 Strichartz estimates including endpoints still hold on non-trapping asymptotically conic manifold or in a conic space (see \cite{HZ, ZZ,ZZ2}) 
 but the dispersive estimate fails due to the conjugated points (e.g. see \cite{HW}).  In the light of those Strichartz estimates were proved for admissible pairs
 even without the dispersive estimate, it is natural to ask whether the inhomogeneous Strichartz estimates hold for some non-admissible pairs.
Due to the inverse-square potential, the usual dispersive estimate \eqref{dis-est} fails,
however we also want to prove  inhomogeneous Strichartz estimates for some non-admissible pairs. More precisely, we obtain the following result on the inhomogeneous Strichartz estimate.

 \begin{theorem}[Inhomogeneous Strichartz estimate]\label{thm-in} Let $\mathcal{L}_V=-\Delta+V(x)$  be given as above.
Then the inhomogeneous Strichartz estimate  holds for a constant $C$ and $s\in A_{\nu_0}$
\begin{equation}\label{Str-est-in'}
\left\|\int_0^t  e^{i(t-\sigma)\mathcal{L}_V}F(\sigma) d\sigma\right\|_{L^2(\R;L^{\frac{2n}{n-2s},2})}\leq
C\|F\|_{L^2(\R;L^{\frac{2n}{n+2(2-s)},2})}
\end{equation}
where
 \begin{equation}\label{Av}
 A_{\nu_0}=\left[\frac{n}{2(n-1)},\frac{3n-4}{2(n-1)}\right]\cap R_{\nu_0}.
\end{equation}
\end{theorem}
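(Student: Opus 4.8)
The plan is to deduce the inhomogeneous Strichartz estimate \eqref{Str-est-in'} from the weighted resolvent estimate \eqref{in-local-sm1} of Theorem~\ref{thm:w-resolvent} by a now-standard $TT^*$ / Kato-smoothing argument, rephrased in a form adapted to non-admissible pairs. First I would reduce the claim to a \emph{local smoothing} estimate: using the spectral theorem and Plancherel in time, the weighted resolvent bound \eqref{in-local-sm1} with weight $r^{-\alpha}$, $\alpha\in R_{\nu_0}$, is equivalent (by Kato's theory of $\LL_V$-smooth operators, cf.\ \cite{Kato,RS}) to the homogeneous local smoothing estimate
\begin{equation}\label{loc-sm-in}
\left\| r^{-\alpha}\, e^{it\LL_V} u_0\right\|_{L^2(\R\times\R^n)} \leq C \left\| r^{\alpha-2}\,(\LL_V)^{?}\, u_0\right\|,
\end{equation}
and, by duality and the Christ--Kiselev lemma applied to the retarded integral $\int_0^t$, to the inhomogeneous bound
\begin{equation}\label{TT*-in}
\left\| r^{-\alpha_1}\int_0^t e^{i(t-\sigma)\LL_V} r^{-\alpha_2} G(\sigma)\,d\sigma\right\|_{L^2(\R\times\R^n)} \leq C \|G\|_{L^2(\R\times\R^n)}
\end{equation}
for suitable pairs $\alpha_1,\alpha_2$ with $\alpha_1,\alpha_2\in R_{\nu_0}$ and $\alpha_1+\alpha_2=2$. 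Here $R_{\nu_0}$ is exactly the interval from \eqref{Rv}, and the symmetric choice $\alpha_1=\alpha_2=1$ recovers the classical estimate used in \cite{BPSS,BPST}, while the asymmetric choices are what buy us the non-admissible range.

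The second step is to convert the weighted $L^2$ bound \eqref{TT*-in} into the unweighted Lorentz-space bound \eqref{Str-est-in'}. This is where the parameter $s\in A_{\nu_0}$ enters: I would interpolate/compose the weighted smoothing estimate with the $L^2$-boundedness of fractional powers and Hardy-type inequalities for $\LL_V$. Concretely, the heart of the matter is a Sobolev-type embedding
\begin{equation}\label{weight-to-Lorentz}
\left\| r^{-\alpha} g\right\|_{L^2(\R^n)} \lesssim \left\| g\right\|_{L^{\frac{2n}{n+2(2-s)},2}(\R^n)},\qquad \left\| h\right\|_{L^{\frac{2n}{n-2s},2}(\R^n)} \lesssim \left\| r^{-\alpha'} h\right\|_{L^2(\R^n)}
\end{equation}
valid for the appropriate $\alpha,\alpha'$ once $s$ and $n$ are related correctly (these are essentially Pitt/Stein--Weiss inequalities in Lorentz spaces, and can also be obtained from the Hardy inequality for $\LL_V$, which holds precisely because $-\Delta_\theta+V_0+(n-2)^2/4$ is strictly positive). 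Matching exponents forces $\alpha = 2-s - \tfrac{n}{2}\cdot 0 + \dots$; the bookkeeping will produce the two endpoints $s=\tfrac{n}{2(n-1)}$ and $s=\tfrac{3n-4}{2(n-1)}$ of $A_{\nu_0}$, which are exactly the values at which $L^{\frac{2n}{n-2s},2}$ and $L^{\frac{2n}{n+2(2-s)},2}$ become the familiar Keel--Tao endpoint pair; the intersection with $R_{\nu_0}$ in \eqref{Av} then records the constraint that the corresponding weight exponent $\alpha$ must lie in the admissible interval for which \eqref{in-local-sm1} was proved. Inserting \eqref{weight-to-Lorentz} into \eqref{TT*-in} and using the Christ--Kiselev lemma (legitimate here since the time exponents are $2$ and $2$, which is fine as the spatial exponents are distinct, $q=\tilde q=2$ being the non-admissible feature) yields \eqref{Str-est-in'}.

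I expect the main obstacle to be the rigorous justification of \eqref{TT*-in} for the \emph{asymmetric} weights $\alpha_1\neq\alpha_2$, i.e.\ showing that the full interval $R_{\nu_0}$ — not just $\alpha=1$ — is available in the retarded (rather than the full-line) setting. The full-line version follows cleanly from \eqref{in-local-sm1} by Plancherel in $t$ and the resolvent identity relating $\widehat{e^{it\LL_V}}$ to the boundary values $(\LL_V-\lambda\mp i0)^{-1}$; but the retarded integral $\int_0^t$ is not diagonalized by the Fourier transform, so one must either invoke the Christ--Kiselev lemma (which applies here because $L^2_t$ with distinct spatial targets still satisfies the required exponent inequality) or argue directly as in \cite{BPST,BM1}. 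A secondary technical point is the low-versus-high frequency splitting: \eqref{in-local-sm1} is uniform in $\sigma\notin\R^+$, so no separate frequency localization is strictly needed, but to land in Lorentz spaces with the correct scaling one must check that the scaling weight $|\sigma|^{\frac n2(\frac1p-\frac1q)-1}$ implicit in \eqref{unf-sob} integrates correctly against $d\sigma$ over the spectrum — this is precisely the computation that singles out the $L^2_t$ norm and the pair $(\frac{2n}{n-2s},\frac{2n}{n+2(2-s)})$, and it is where the condition $s\in R_{\nu_0}$ (via the Sobolev inequality of Theorem~\ref{thm-unf-sob}) is consumed.
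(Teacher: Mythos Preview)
Your proposal has two genuine gaps, and both are at the heart of why the paper does \emph{not} argue this way.

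First, the Christ--Kiselev lemma cannot be invoked when both time exponents equal $2$: the lemma requires a strict inequality between the input and output time Lebesgue exponents, and having distinct spatial targets does not help. So you cannot pass from the full-line weighted $L^2_{t,x}$ bound to the retarded one \eqref{TT*-in} this way. (That retarded weighted estimate \emph{is} true---the paper records it as Lemma~\ref{lem:h} and obtains it from Theorem~\ref{thm:w-resolvent} via D'Ancona's direct argument \cite{Da}, not Christ--Kiselev.)

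Second, and more seriously, the second inequality in your \eqref{weight-to-Lorentz}, namely $\|h\|_{L^{\frac{2n}{n-2s},2}}\lesssim \|r^{-\alpha'}h\|_{L^2}$, is simply false: take $h$ a bump of width $N^{-1}$ centred at a point on the unit sphere; then $\|r^{-\alpha'}h\|_{L^2}\sim \|h\|_{L^2}\sim N^{-n/2}$ while $\|h\|_{L^q}\sim N^{-n/q}\gg N^{-n/2}$ for $q>2$. Weighted $L^2$ does not embed into $L^q$ for $q>2$; the Stein--Weiss/H\"older inequality goes the other way. Consequently you cannot upgrade \eqref{TT*-in} to \eqref{Str-est-in'} by pointwise embeddings alone---some genuine dispersive input is needed on the outside.

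The paper supplies exactly that missing ingredient via the iterated Duhamel identity
\[
\mathcal N=\mathcal N_0-i\,\mathcal N_0V\mathcal N_0-\mathcal N_0V\mathcal N V\mathcal N_0,
\]
so that the perturbed Duhamel operator is sandwiched between two \emph{free} Duhamel operators $\mathcal N_0$. The free non-admissible Strichartz estimates of Foschi--Vilela--Koh--Schippa then do the work of landing in Lorentz spaces (this is where the interval $[\frac{n}{2(n-1)},\frac{3n-4}{2(n-1)}]$ actually comes from---it is the known range for the free equation, not a Keel--Tao endpoint), while H\"older in Lorentz spaces converts the outer layers to weighted $L^2$; the inner piece $r^{-s}\mathcal N r^{-2+s}$ is then controlled on $L^2_{t,x}$ by Lemma~\ref{lem:h}, which consumes the constraint $s\in R_{\nu_0}$. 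Your outline misses this sandwich structure, and without it the argument does not close.
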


\vspace{0.2cm}

\begin{remark}The set $A_{\nu_0}$ is an intersection of two sets, the first set is related to the known result of the inhomogeneous Strichartz estimates in \cite{Foschi,Koh,Vilela,Schippa}  when $V=0$ and
the second set $R_{\nu_0}$ is from Theorem \ref{thm:w-resolvent}.
The picture of inhomogeneous Strichartz estimate is far to be completed even in the case without potential.
\end{remark}

\vspace{0.2cm}

Finally we introduce some notations. We use $A\lesssim B$ to denote
$A\leq CB$ for some large constant C which may vary from line to
line and depend on various parameters, and similarly we use $A\ll B$
to denote $A\leq C^{-1} B$. We employ $A\sim B$ when $A\lesssim
B\lesssim A$. If the constant $C$ depends on a special parameter
other than the above, we shall denote it explicitly by subscripts.
For instance, $C_\epsilon$ should be understood as a positive
constant not only depending on $p, q, n$, and $M$, but also on
$\epsilon$. Throughout this paper, pairs of conjugate indices are
written as $p, p'$, where $\frac{1}p+\frac1{p'}=1$ with $1\leq
p\leq\infty$. \vspace{0.2cm}

{\bf Acknowledgments:}\quad  J. Zhang and J. Zheng were supported by  NSFC Grants (11771041, 11831004, 11901041) and H2020-MSCA-IF-2017(790623). H. Mizutani is partially supported by JSPS KAKENHI Grant Number JP17K14218. We are grateful to the anonymous referee  for helpful comments.\vspace{0.2cm}

\section{The proof of the weighted resolvent estimate}

In this section, we prove  the uniform weighted resolvent estimates which are the key point to prove the other two theorems.

\begin{proof}[{\bf The proof of  Theorem \ref{thm:w-resolvent}}]
To prove Theorem \ref{thm:w-resolvent} although we follow the idea in \cite{BPST}, some modifications and improvements are required due to the reason
 that we have to replace the multiplier $r e^{-2r\tau}\phi(r)\partial_r \bar{v}$ by $r^\beta e^{-2r\tau}\phi(r)\partial_r \bar{v}$ which brings much harder treating terms
in  the weighted Hardy's inequality. By the duality, we only need to prove \eqref{in-local-sm1} with  $R_{\nu_0}\ni \alpha\geq1$. Indeed, if we could prove
\begin{equation}
\|r^{-\alpha}(\LL_V-\sigma)^{-1}r^{-2+\alpha}\|_{L^{2}(\R^n)\to L^2(\R^n)}\leq C, \quad 1\leq \alpha<\alpha_0
\end{equation}
where $\alpha_0=3/2$ or $1+\frac{\nu_0^2}{1-2\nu_0^2}$, by taking the adjoint of this estimate and replacing $\sigma$ by $\bar{\sigma}$,
we also have
\begin{equation}
\|r^{-2+\alpha}(\LL_V-\sigma)^{-1}r^{-\alpha}\|_{L^{2}(\R^n)\to L^2(\R^n)}\leq C, \quad 1\leq \alpha<\alpha_0
\end{equation}
which shows
\begin{equation}
\|r^{-\alpha'}(\LL_V-\sigma)^{-1}r^{-2+\alpha'}\|_{L^{2}(\R^n)\to L^2(\R^n)}\leq C, \quad 2-\alpha_0\leq \alpha'\leq1,
\end{equation}
where $\alpha'=2-\alpha$. So we only need to prove \eqref{in-local-sm1} with $1\leq \alpha<\alpha_0$.\vspace{0.2cm}

Let $z=\sqrt{-\sigma}$ with the branch such that $\mathrm{Re} z=\tau>0$. Then given $f\in L^2(\R^n)$ and $\sigma\in \C\setminus \R^+$, consider
the Helmholtz equation
\begin{equation}\label{helm-eq}
\LL_Vu+z^2u=f.
\end{equation} By density argument, we can take $f\in \mathcal{C}_0^\infty(\R^n)$. Then $u$ is a classical solution of  \eqref{helm-eq} and define
 $v(r,\theta): (0,\infty)\times \mathbb{S}^{n-1}\to \C$ by
 $$v(r,\theta)=r^{\frac{n-1}2} e^{rz}u(r,\theta).$$
  Then we see that
 \begin{equation*}
 \begin{split}
 \partial_r v=&r^{\frac{n-1}2} e^{rz}\left(\frac{n-1}{2r}u+zu+\partial_r u\right),\\
-\partial_r^2 v=&r^{\frac{n-1}2} e^{rz}\left(-\partial_r^2 u-2\Big(\frac{n-1}{2r}+z\Big)\partial_r u-\Big(\frac{(n-1)(n-3)}{4r^2}+\frac{(n-1)z}{r}+z^2 \Big)u\right),\\
z\partial_r v=&r^{\frac{n-1}2} e^{rz}\Big( z\partial_r u+\big(\frac{(n-1)z}{2r}+z^2 \big)u\Big).
\end{split}
\end{equation*}
Therefore, $v$ satisfies
 \begin{equation}\label{v}
 \begin{split}
&-\partial_r^2 v+2z\partial_r v+\left(\frac{(n-1)(n-3)}{4}-\Delta_\theta+V_0(\theta)\right)\frac{v}{r^2}
\\&=r^{\frac{n-1}2} e^{rz}\left(-\partial_r^2 u-\frac{n-1}{r}\partial_r u+\Big(-\Delta_\theta+V_0(\theta)+z^2 \Big)u\right)
\\&=r^{\frac{n-1}2} e^{rz}f.
\end{split}
\end{equation}
For fixed $M>m>0$, let $\phi=\phi_{m,M}(r)$ be a smooth cut-off function such that $0\leq\phi\leq1$ with being zero outside $[0, M+1]$ and equaling to $1$ on $[m,M]$.  By multiplying \eqref{v}
by $r^\beta e^{-2r\tau}\phi(r)\partial_r \bar{v}$ with $\beta$ being chosen later and taking the real part, we show that
 \begin{equation*}
 \begin{split}
&-\frac12 r^{\beta}e^{-2r\tau}\phi(r)\partial_r|\partial_r v|^2+2\tau r^{\beta} e^{-2r\tau}\phi(r)|\partial_r v|^2\\&+\frac{1}{2r^{2-\beta}}e^{-2r\tau}\phi(r) \left(\frac{(n-1)(n-3)}{4}+V_0(\theta)\right)\partial_r|v|^2
+\frac{1}{r^{2-\beta}}e^{-2r\tau}\phi(r) \mathrm{Re}(-\Delta_\theta v\partial_r\bar{v})\\& =r^{\frac{n-1}2+\beta}\phi(r) \mathrm{Re}\left(e^{r(z-2\tau)}\partial_r \bar{v} f\right).
\end{split}
\end{equation*}
Integrating the above formula on $(0,\infty)\times \mathbb{S}^{n-1}$ but with volume $drd\theta$ and performing the integration by parts, we have
 \begin{equation*}
 \begin{split}
&\frac12 \int_0^\infty\int_{\mathbb{S}^{n-1}} \partial_r\left(r^{\beta}e^{-2r\tau}\phi(r)\right)|\partial_r v|^2 dr d\theta
\\&+2\tau \int_0^\infty\int_{\mathbb{S}^{n-1}} r^{\beta}e^{-2r\tau}\phi(r)|\partial_r v|^2 dr d\theta\\&-\frac{1}{2}\int_0^\infty\int_{\mathbb{S}^{n-1}}\partial_r\left(r^{-2+\beta} e^{-2r\tau}\phi(r)\right) \left(\frac{(n-1)(n-3)}{4}+V_0(\theta)\right)|v|^2 dr d\theta
\\ \quad &-\frac{1}{2}\int_0^\infty\int_{\mathbb{S}^{n-1}}\partial_r\left(r^{-2+\beta} e^{-2r\tau}\phi(r)\right)|\nabla_\theta v|^2dr d\theta
\\&=\int_0^\infty\int_{\mathbb{S}^{n-1}} r^{\frac{n-1}2+\beta}\phi(r) \mathrm{Re}\left(e^{r(z-2\tau)}\partial_r \bar{v} f\right) dr d\theta.
\end{split}
\end{equation*}
Furthermore we compute that
 \begin{equation*}
 \begin{split}
&\frac12 \int_0^\infty\int_{\mathbb{S}^{n-1}} e^{-2r\tau}\phi(r)r^{\beta-1} \left(\beta- 2r\tau \right)|\partial_r v|^2 dr d\theta+\frac12 \int_0^\infty\int_{\mathbb{S}^{n-1}} r^{\beta}e^{-2r\tau}\phi'(r)|\partial_r v|^2 dr d\theta
\\&+2\tau \int_0^\infty\int_{\mathbb{S}^{n-1}} r^{\beta}e^{-2r\tau}\phi(r)|\partial_r v|^2 dr d\theta\\&+\frac{1}{2}\int_0^\infty\int_{\mathbb{S}^{n-1}}e^{-2r\tau}\phi(r) r^{-3+\beta}\left((2-\beta)+2r\tau \right) \left(\frac{(n-2)^2}{4}-\frac14+V_0(\theta)\right)|v|^2 dr d\theta
\\&-\frac12 \int_0^\infty\int_{\mathbb{S}^{n-1}} r^{-2+\beta}e^{-2r\tau}\phi'(r)\left(\frac{(n-2)^2}{4}-\frac14+V_0(\theta)\right)|v|^2 dr d\theta
\\ \quad &+\frac{1}{2}\int_0^\infty\int_{\mathbb{S}^{n-1}}e^{-2r\tau}\phi(r) r^{-3+\beta}\left((2-\beta)+2r\tau \right)  |\nabla_\theta v|^2 dr d\theta
\\&-\frac12 \int_0^\infty\int_{\mathbb{S}^{n-1}} r^{-2+\beta}e^{-2r\tau}\phi'(r)|\nabla_\theta v|^2 dr d\theta
\\&=\int_0^\infty\int_{\mathbb{S}^{n-1}} r^{\frac{n-1}2+\beta}\phi(r) \mathrm{Re}\left(e^{r(z-2\tau)}\partial_r \bar{v} f\right) dr d\theta.
\end{split}
\end{equation*}
Therefore we show
 \begin{equation}
 \begin{split}
&\frac12 \int_0^\infty\int_{\mathbb{S}^{n-1}} e^{-2r\tau}\phi(r) r^{\beta-1}\left(\left(\beta+ 2r\tau \right)|\partial_r v|^2-\left((2-\beta)+2r\tau \right) \frac{|v|^2}{4r^2}\right)  dr d\theta
\\&+\frac{1}{2}\int_0^\infty\int_{\mathbb{S}^{n-1}}e^{-2r\tau}\phi(r) r^{-3+\beta}\left((2-\beta)+2r\tau \right)  \left(\left(\frac{(n-2)^2}{4}+V_0(\theta)\right)|v|^2+|\nabla_\theta v|^2\right) dr d\theta
\\&+\frac12 \int_0^\infty\int_{\mathbb{S}^{n-1}} r^{\beta}e^{-2r\tau}\phi'(r)\left(|\partial_r v|^2+\frac{1}{4r^2}|v|^2-\frac1{r^2}\big(|\nabla_\theta v|^2+(V_0(\theta)+\frac{(n-2)^2}4)|v|^2\big)\right) dr d\theta
\\&=\int_0^\infty\int_{\mathbb{S}^{n-1}} r^{\frac{n-1}2+\beta}\phi(r) \mathrm{Re}\left(e^{r(z-2\tau)}\partial_r \bar{v} f\right) dr d\theta.
\end{split}
\end{equation}
On the other hand,  $-\Delta_\theta+V_0(\theta)+(n-2)^2/4$ is positive on ${\mathbb{S}^{n-1}}$ with the smallest eigenvalue $\nu_0^2>0$, that is,
\begin{equation}\label{lowb}
\int_{\mathbb{S}^{n-1}}\left(\left(\frac{(n-2)^2}{4}+V_0(\theta)\right)|v|^2+|\nabla_\theta v|^2\right) d\theta\geq \nu_0^2\int_{\mathbb{S}^{n-1}} |v(r,y)|^2 d\theta\geq 0.
\end{equation}
Hence we show for $\forall \epsilon>0$
 \begin{equation}\label{est:v}
 \begin{split}
&\frac12 \int_0^\infty\int_{\mathbb{S}^{n-1}} e^{-2r\tau}\phi(r) r^{\beta-1}\left(\left(\beta+ 2r\tau \right)|\partial_r v|^2+\left((2-\beta)+2r\tau \right)(\nu_0^2-\frac14) \frac{|v|^2}{r^2}\right)  dr d\theta
\\&+\frac12 \int_0^\infty\int_{\mathbb{S}^{n-1}} r^\beta e^{-2r\tau}\phi'(r)\left(|\partial_r v|^2+\frac{1}{4r^2}|v|^2-\frac1{r^2}\big(|\nabla_\theta v|^2+(V_0(\theta)+\frac{(n-2)^2}4)|v|^2\big)\right) dr d\theta
\\&\leq\int_0^\infty\int_{\mathbb{S}^{n-1}} r^{\frac{n-1}2+\beta}\phi(r) \mathrm{Re}\left(e^{r(z-2\tau)}\partial_r \bar{v} f\right) dr d\theta
\\&\leq \frac{1}{4\epsilon^2}\|r^{\frac{1+\beta}2}f\|_{L^2}^2+\epsilon^2\int_0^\infty\int_{\mathbb{S}^{n-1}} \phi(r) e^{-2r\tau}r^{\beta-1}|\partial_r v|^2  dr d\theta.
\end{split}
\end{equation}

For our purpose, we first need the following lemma.

\begin{lemma}\label{to0}Let $0\leq \beta\leq 1$, we have following estimate for $ m\to0, M\to \infty$
\begin{equation}
\int_0^\infty\int_{\mathbb{S}^{n-1}} r^\beta e^{-2r\tau}\phi'(r)\left(|\partial_r v|^2+\frac{1}{4r^2}|v|^2-\frac1{r^2}\big(|\nabla_\theta v|^2+(V_0(\theta)+\frac{(n-2)^2}4)|v|^2\big)\right) dr d\theta\geq 0.
\end{equation}
\end{lemma}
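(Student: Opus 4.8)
The quantity in Lemma~\ref{to0} is an integral supported on $\operatorname{supp}\phi'$, which by construction is contained in $[0,m]\cup[M,M+1]$ (recall $\phi'\equiv 0$ on $[m,M]$ and $\phi$ vanishes outside $[0,M+1]$). So the integral splits as $I_{\mathrm{small}}+I_{\mathrm{large}}$ over the two pieces $r\in[0,m]$ and $r\in[M,M+1]$, and the claim is that each piece tends to $0$ (not merely has a sign) as $m\to 0$ and $M\to\infty$ respectively. First I would therefore control $v$ and $\partial_r v$ pointwise (or in $L^2$ on dyadic shells) in terms of the data, using that $u$ is a genuine classical solution of the Helmholtz equation \eqref{helm-eq} with $f\in\mathcal C_0^\infty$: away from the origin and near infinity, $u$ and its derivatives decay, in fact $u(r,\theta)=r^{\frac{n-1}{2}}e^{rz}u$ inverted gives $v=r^{\frac{n-1}{2}}e^{rz}u$, and since $\operatorname{Re} z=\tau>0$ the exponential weight $e^{-2r\tau}$ in the integrand beats the growth $e^{rz}$, giving $e^{-2r\tau}|v|^2\lesssim r^{n-1}e^{-2r\tau}|u|^2$, which is integrable and small on $[M,M+1]$ as $M\to\infty$ by dominated convergence (or by the a priori bound that $u\in L^2$ with exponential decay coming from the resolvent $(\LL_V+z^2)^{-1}$ being bounded for $\sigma\notin\R^+$). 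The angular term $|\nabla_\theta v|^2$ and $|\partial_r v|^2$ are handled the same way since $\partial_r u$, $\nabla_\theta u$ enjoy the same type of decay.

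For the piece near $r=0$ I would use the behaviour of solutions of the inverse-square Helmholtz equation at the origin. Expanding $u$ in spherical harmonics (eigenfunctions of $-\Delta_\theta+V_0(\theta)+(n-2)^2/4$ with eigenvalues $\nu_k^2\geq\nu_0^2>0$), each mode behaves like $r^{-\frac{n-2}{2}+\nu_k}$ near the origin (the other indicial root $r^{-\frac{n-2}{2}-\nu_k}$ being excluded because $u\in L^2$ locally, together with the regularity forced by $f\in\mathcal C_0^\infty$). Hence $v=r^{\frac{n-1}{2}}e^{rz}u$ behaves like $r^{\frac12+\nu_k}$, so $|v|^2/r^2\sim r^{2\nu_k-1}$, $|\partial_r v|^2\sim r^{2\nu_k-1}$, $|\nabla_\theta v|^2/r^2\sim r^{2\nu_k-1}$, and therefore $r^\beta\cdot(\text{integrand without }\phi')\sim r^{\beta+2\nu_k-1}$; multiplying by $|\phi'(r)|\lesssim 1/m$ and integrating over $r\in[0,m]$ gives a bound $\lesssim m^{\beta+2\nu_k-1}$, which $\to 0$ as $m\to 0$ provided $\beta+2\nu_0-1>0$. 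Here is where the hypothesis $0\le\beta\le 1$ enters; I would need to check it is compatible with $\nu_0>0$ — and indeed even if $\nu_0$ is tiny one still has convergence for $\beta$ close to $1$, which is exactly the regime $1\le\alpha<\alpha_0$ used in the proof of Theorem~\ref{thm:w-resolvent} (after the substitution relating $\alpha$ and $\beta$). I would make the mode-by-mode estimate rigorous either by truncating the spherical-harmonic series and using Parseval on $\mathbb S^{n-1}$, or, more cleanly, by invoking the weighted Hardy inequality on $(0,m)$ to absorb $|v|^2/r^2$ and $|\nabla_\theta v|^2/r^2$ into $|\partial_r v|^2$ plus boundary terms at $r=m$ (the boundary terms at $r=0$ vanishing by the $L^2$-admissibility of $v$), then letting $m\to 0$.

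\textbf{Main obstacle.} The delicate point is the near-origin analysis: one has to know that the $L^2$ solution $u$ does not carry the singular indicial component $r^{-\frac{n-2}{2}-\nu_0}$, i.e. a regularity/selection statement for the Friedrichs-extension solution of $\LL_V u+z^2u=f$. This requires a short argument — either the explicit ODE analysis of each spherical mode with an absorbing-at-zero boundary condition, or a reference to the known description of $\operatorname{Dom}(\LL_V)$ for the inverse-square operator under the positivity assumption $\nu_0>0$ — and it is precisely the place where the positivity of $-\Delta_\theta+V_0(\theta)+(n-2)^2/4$ is essential, since it both guarantees $\nu_0^2>0$ (so the exponents $2\nu_k-1$ are bounded below by $2\nu_0-1>-1$) and selects the right self-adjoint realisation. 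Once that structural fact is in hand, the rest is the two dominated-convergence limits described above, which are routine given $f\in\mathcal C_0^\infty$ and $\operatorname{Re} z>0$.
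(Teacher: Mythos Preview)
Your decomposition into a near-origin piece and a near-infinity piece supported on $\operatorname{supp}\phi'$ is the right starting point, but two aspects diverge from the paper's argument, and one of them is a genuine gap.

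\textbf{The gap at $M\to\infty$.} Your claim that ``the exponential weight $e^{-2r\tau}$ in the integrand beats the growth $e^{rz}$'' is off by exactly one factor: since $|e^{rz}|^2=e^{2r\tau}$ (recall $\mathrm{Re}\,z=\tau$), one has $e^{-2r\tau}|v|^2=r^{n-1}|u|^2$ with \emph{no} residual exponential decay, and likewise for $e^{-2r\tau}|\partial_r v|^2$. Hence the quantity to control on $[M,M+1]$ is essentially $r^\beta\cdot r^{n-1}\!\int_{\mathbb S^{n-1}}(|\partial_r u|^2+|u|^2)\,d\theta$, and for $\beta>0$ this is not dominated by any fixed $L^1(dr)$ function coming from $u\in H^1$; dominated convergence does not apply. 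Your fallback (exponential decay of $u$ from the resolvent) is plausible but is itself a nontrivial claim for the inverse-square potential and would need its own proof. The paper avoids this entirely: writing $g(r)=r^{n-1}\!\int_{\mathbb S^{n-1}}(|\partial_r u|^2+|u|^2)\,d\theta$, it uses $\int_0^\infty g<\infty$ (a consequence of $u\in H^1$, established by a multiplier argument in Lemma~\ref{lem:uhelmequlv}) to extract a \emph{sequence} $M_n\to\infty$ with $\int_{M_n}^{M_n+1}g<\mu_n/M_n$, whence $\int_{M_n}^{M_n+1}r^\beta g\to 0$ for $\beta\le 1$ by a diagonal argument. A limit along a sequence suffices, since the eventual inequality \eqref{est:reduction} is independent of $m,M$.

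\textbf{A simpler route at $m\to 0$.} Your indicial-root analysis is sound in principle and correctly isolates the threshold $\beta>1-2\nu_0$, but the paper bypasses the domain-selection issue you flag. It first observes that $\phi'\ge 0$ on $[0,m]$ and $\phi'\le 0$ on $[M,M+1]$; combined with the positivity \eqref{lowb}, this means one may keep all favourable-sign contributions and is reduced to showing only
\[
\int_0^m\!\!\int_{\mathbb S^{n-1}} r^{\beta-2}e^{-2r\tau}\Big(|\nabla_\theta v|^2+\big(V_0(\theta)+\tfrac{(n-2)^2}{4}\big)|v|^2\Big)\,dr\,d\theta\longrightarrow 0.
\]
Using $\beta\ge 0$ and again $e^{-2r\tau}|v|^2=r^{n-1}|u|^2$, this is dominated by $\int_{\{r\le m\}}\!\big(|\nabla u|^2+r^{-2}|u|^2\big)\,dx$, which tends to zero simply because $u\in\dot H^1$ (Lemma~\ref{lem:uhelmequlv}) together with the standard Hardy inequality and absolute continuity of the integral. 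No spherical-harmonic expansion and no description of $\operatorname{Dom}(\LL_V)$ are needed.
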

We postpone the proof in the next subsection.\vspace{0.2cm}

By  taking the limits $m\to 0$ and $M\to \infty$ and using Lemma \ref{to0} and \eqref{est:v}, we have
 \begin{equation*}
 \begin{split}
\frac12 \int_0^\infty\int_{\mathbb{S}^{n-1}} e^{-2r\tau} &r^{\beta-1}\Big(\left(\beta+ 2r\tau-2\epsilon^2 \right)|\partial_r v|^2
\\&+\big((2-\beta)+2r\tau \big)(\nu_0^2-\frac14) \frac{|v|^2}{r^2}\Big)  dr d\theta
\leq \frac{1}{4\epsilon^2}\|r^{\frac{1+\beta}2}f\|_{L^2}^2.
\end{split}
\end{equation*}
Furthermore we obtain for $0<\beta\leq1$
\begin{equation}\label{est:reduction}
 \begin{split}
(1&-\frac{2\epsilon^2}\beta) \int_0^\infty\int_{\mathbb{S}^{n-1}} e^{-2r\tau} r^{\beta-1}\left(\beta+ 2r\tau\right)|\partial_r v|^2dr d\theta
\\&+(\nu_0^2-\frac14)\int_0^\infty\int_{\mathbb{S}^{n-1}} e^{-2r\tau} r^{\beta-1}\big((2-\beta)+2r\tau \big) \frac{|v|^2}{r^2}  dr d\theta
\leq \frac{1}{2\epsilon^2}\|r^{\frac{1+\beta}2}f\|_{L^2}^2.
\end{split}
\end{equation}

{\bf Case 1: $\nu_0>1/2$.} Since $0<\beta\leq 1$ and $r\tau>0$, we have
\begin{equation}
 \begin{split}
\int_0^\infty\int_{\mathbb{S}^{n-1}} e^{-2r\tau} r^{\beta-1} \frac{|v|^2}{r^2}  dr d\theta
\leq C_{\nu_0}\|r^{\frac{1+\beta}2}f\|_{L^2}^2,
\end{split}
\end{equation}
which implies $$\|r^{-\alpha} u\|_{L^2(\R^n)}\leq C_{\nu_0}\|r^{2-\alpha}f\|_{L^2(\R^n)}, \quad \beta=3-2\alpha.$$
Since $0<\beta\leq1$, we have showed that if $\nu_0>1/2$ and $1\leq \alpha<3/2$.
\begin{equation}
\|r^{-\alpha}(\LL_V-z^2)^{-1}r^{\alpha-2}\|_{L^2\to L^2}\leq C.
\end{equation}\vspace{0.2cm}

{\bf Case 2: $0<\nu_0\leq 1/2$.} In this case, we need a weighted Hardy's inequality

\begin{lemma}[Weighted Hardy's inequality]\label{lem:w-hardy} Let $w\in \mathcal{C}^2(\R^+\setminus\{0\};\R)$ satisfy
\begin{equation}\label{equ:wcond}
w(r)\geq0, w'(r)\leq 0, \quad r(w'(r)^2+2w(r)w''(r))\geq 2w(r)w'(r), \forall r\geq0.
\end{equation}
Let $g:\R^+\to\C$ be such that
\begin{equation}\label{equ:funccond}
  \int_0^\infty \big(w^2(r)|g'|^2+(w'(r))^2|g|^2\big)\;dr<+\infty
\end{equation}
and
\begin{equation}\label{equ:fcond}
  \liminf_{r\to0}w(r)w'(r)|g(r)|^2=0.
\end{equation} Then
\begin{equation}
\int_0^\infty w^2\frac{|g(r)|^2}{r^2}dr\leq 4\int_0^\infty w^2|g'(r)|^2dr.
\end{equation}
\end{lemma}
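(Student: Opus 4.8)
\textbf{Proof plan for Lemma \ref{lem:w-hardy}.} The plan is to prove the weighted Hardy inequality by the standard "completing the square" device, adapted to a general weight $w$ rather than a pure power. First I would fix a real parameter to be optimized and consider, for $g$ real-valued (the complex case follows by applying the real case to $\mathrm{Re}\,g$ and $\mathrm{Im}\,g$ separately), the manifestly nonnegative quantity
\begin{equation*}
0\le \int_0^\infty \Big| w(r)g'(r)+\frac{w(r)}{r}\,g(r)+\lambda\, w'(r)g(r)\Big|^2\,dr
\end{equation*}
for a suitable choice of $\lambda$ (the natural guess, to match the factor $4$, being $\lambda=-1$ so that the last two terms combine with the structural hypothesis on $w$). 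Expanding the square produces three "diagonal" terms, namely $\int w^2|g'|^2$, $\int w^2 r^{-2}|g|^2$, and $\int (w')^2|g|^2$, plus three "cross" terms, each of which I would integrate by parts once in $r$.

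The key computation is the treatment of the cross terms. The term $2\int w^2 r^{-1} g g' = \int w^2 r^{-1}(|g|^2)'\,dr$ integrates by parts to $-\int \big(w^2 r^{-1}\big)'|g|^2\,dr$ plus a boundary contribution; similarly $2\lambda\int w w' g g' = \lambda\int ww'(|g|^2)'\,dr$ becomes $-\lambda\int (ww')'|g|^2\,dr$ plus boundary, and the remaining cross term $2\lambda\int w w' r^{-1}|g|^2\,dr$ stays as it is. The boundary terms at $r=\infty$ vanish by the integrability hypothesis \eqref{equ:funccond} (which controls $w g'$ and $w' g$ in $L^2$, hence $w^2 r^{-1}|g|^2$ and $ww'|g|^2$ tend to $0$ along a sequence, and monotonicity of $w$ lets one conclude), and the boundary term at $r=0$ vanishes precisely by hypothesis \eqref{equ:fcond}. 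Collecting the coefficient of $|g|^2$ after these manipulations, one obtains an expression of the form
\begin{equation*}
\int_0^\infty w^2\frac{|g|^2}{r^2}\,dr + \big(1+2\lambda+\lambda^2\big)\int_0^\infty (w')^2|g|^2\,dr - \big(\text{terms involving } w w', w w'', \text{ and } r\big)\int_0^\infty(\cdots)|g|^2\,dr \le 4\int_0^\infty w^2|g'|^2\,dr,
\end{equation*}
and the structural assumption $r\big((w')^2+2ww''\big)\ge 2ww'$ in \eqref{equ:wcond}, together with $w\ge0$ and $w'\le0$, is exactly what is needed to guarantee that the leftover $|g|^2$ terms have a favorable sign, so they can be discarded.

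The main obstacle I anticipate is bookkeeping: getting the factor $4$ on the right-hand side exactly (rather than a worse constant) forces the correct choice of $\lambda$, and one must be careful that after integrating by parts the sign conditions in \eqref{equ:wcond} line up with the sign of every residual term — in particular the interplay between $w'\le 0$ and the $r^{-1}$ weights, where a sign error is easy to make. A secondary technical point is justifying the vanishing of the boundary term at infinity rigorously from \eqref{equ:funccond}: since that hypothesis only gives $L^2$ integrability, one can only assert the existence of a sequence $r_k\to\infty$ along which the boundary term vanishes, so the integration by parts should be carried out on $[\varepsilon, r_k]$ and then one passes to the limit $\varepsilon\to 0$, $k\to\infty$, using \eqref{equ:fcond} for the lower endpoint. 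Once the inequality is established for real $g$, the complex case is immediate since $|g'|^2 = |(\mathrm{Re}\,g)'|^2 + |(\mathrm{Im}\,g)'|^2$ and likewise for $|g|^2$, and all hypotheses are phrased in terms of $|g|$.
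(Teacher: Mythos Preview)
Your overall strategy --- expand a nonnegative perfect square built from $wg'$, $\tfrac{w}{r}g$ and $w'g$, integrate the cross terms by parts, and use the structural hypothesis \eqref{equ:wcond} to discard the leftover $|g|^2$ terms --- is exactly the paper's approach, which is phrased there via the operator $G:=\tfrac{1}{i}(w\partial_r+\tfrac12 w')$ and the inequality $\|(G-im)g\|_{L^2}^2\ge 0$ with $m(r)=-\tfrac{w(r)}{2r}$; the treatment of the boundary terms you outline also matches.

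There is, however, a concrete error in your ansatz. With the $\tfrac{w}{r}g$ coefficient fixed at $+1$, no choice of $\lambda$ produces the constant $4$: expanding $\int_0^\infty\big|wg'+\tfrac{w}{r}g+\lambda w'g\big|^2\,dr$ and integrating the cross terms by parts, the coefficient of $\int \tfrac{w^2}{r^2}|g|^2$ comes out equal to $+2$ (independently of $\lambda$), so the inequality $0\le\int|\cdots|^2$ places $\int \tfrac{w^2}{r^2}|g|^2$ on the same side as $\int w^2|g'|^2$ and gives no upper bound for it. The correct square is
\[
\Big|\,wg'\;-\;\tfrac{w}{2r}\,g\;+\;\tfrac12\,w'g\,\Big|^2,
\]
i.e.\ the $\tfrac{w}{r}$ coefficient must be $-\tfrac12$ and the $w'$ coefficient $+\tfrac12$; this is precisely what the paper's choice $m=-w/(2r)$ encodes. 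With these coefficients the $\int \tfrac{w^2}{r^2}|g|^2$ term appears with coefficient $-\tfrac14$, and the remaining $|g|^2$ coefficient is $-\tfrac14(w')^2-\tfrac12ww''+\tfrac12\tfrac{ww'}{r}$, which is $\le 0$ exactly by \eqref{equ:wcond}. So the fix is minor --- just the sign and magnitude of one coefficient --- but as written your square does not yield the inequality.
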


Next we use the modified weighted Hardy's inequality to show

\begin{lemma}\label{lem:asmalllv}
Let $\max\big\{0,1-2\nu_0\big\}<\beta\leq1$, then we have
\begin{align}\label{equ:asmalllv}
 & \int_0^\infty\int_{\mathbb{S}^{n-1}} e^{-2r\tau} r^{\beta-1}\left(\beta+ 2r\tau\right)|\partial_r v|^2dr d\theta \\\nonumber
 \geq&\frac14 \int_0^\infty\int_{\mathbb{S}^{n-1}} e^{-2r\tau} r^{\beta-1}\big(\beta+2r\tau \big) \frac{|v|^2}{r^2}  dr d\theta.
\end{align}
\end{lemma}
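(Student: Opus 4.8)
The plan is to prove \eqref{equ:asmalllv} by applying the weighted Hardy inequality of Lemma~\ref{lem:w-hardy} on each ray. Fix $\theta\in\mathbb{S}^{n-1}$, put $g(r)=v(r,\theta)$, and choose the weight
\[
w(r)=\Big(r^{\beta-1}(\beta+2r\tau)e^{-2r\tau}\Big)^{1/2},\qquad r>0,
\]
which is arranged precisely so that $w^2|g'|^2$ and $w^2|g|^2/r^2$ are the two integrands appearing in \eqref{equ:asmalllv}. Granting that $w$ and $g=v(\cdot,\theta)$ satisfy the hypotheses \eqref{equ:wcond}, \eqref{equ:funccond}, \eqref{equ:fcond}, Lemma~\ref{lem:w-hardy} yields
\[
\int_0^\infty r^{\beta-1}(\beta+2r\tau)e^{-2r\tau}\,\frac{|v(r,\theta)|^2}{r^2}\,dr\leq 4\int_0^\infty r^{\beta-1}(\beta+2r\tau)e^{-2r\tau}\,|\partial_r v(r,\theta)|^2\,dr,
\]
and integrating this over $\theta\in\mathbb{S}^{n-1}$ gives \eqref{equ:asmalllv}. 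So everything reduces to checking the three hypotheses on $w$ and on $v(\cdot,\theta)$.

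First I would verify \eqref{equ:wcond}. Clearly $w\geq0$ and $w\in\CC^2(\R^+\setminus\{0\})$. Writing $p=w^2$ and $L:=p'/p$, a short computation gives
\[
L=\frac{\beta-1}{r}+\frac{2\tau}{\beta+2r\tau}-2\tau=\frac{\beta^2-\beta-4r^2\tau^2}{r(\beta+2r\tau)}\leq0
\]
for $0<\beta\leq1$ and $r\tau\geq0$ (since $\beta^2-\beta\leq0$), and since $w'=\tfrac{w}{2}L$ this gives $w'\leq0$. For the second-order condition, note $w'/w=L/2$, hence $w''/w=(w'/w)'+(w'/w)^2=\tfrac{L'}{2}+\tfrac{L^2}{4}$, so that dividing $r(w'^2+2ww'')\geq2ww'$ by $w^2$ turns it into $r\big(\tfrac34L^2+L'\big)\geq L$. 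Multiplying through by $r^2(\beta+2r\tau)^2>0$ and setting $s=r\tau$, the difference of the two sides collapses to
\[
\tfrac{\beta^2}{4}(1-\beta)(11-3\beta)+6\beta s^2(1-\beta)+6\beta s(1-\beta)+12s^4+8s^3,
\]
a sum of manifestly nonnegative terms for $0<\beta\leq1$, $s\geq0$. Establishing this identity is the main, though elementary, computation.

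Next I would check the integrability and boundary conditions \eqref{equ:funccond}, \eqref{equ:fcond}, which use the behaviour of $v=r^{(n-1)/2}e^{rz}u$ at the two ends of $\R^+$. As $r\to\infty$, $u=(\LL_V-\sigma)^{-1}f$ with $f\in\CC_0^\infty$ and $\sigma\notin\R^+$ decays exponentially at rate $\tau=\mathrm{Re}\sqrt{-\sigma}$, so $|v(r,\theta)|\lesssim r^{(n-1)/2}$ and the factor $e^{-2r\tau}$ forces exponential decay of every integrand involved; moreover the $\int w^2|g'|^2$ part of \eqref{equ:funccond} is already finite, for a.e.\ $\theta$, by \eqref{est:reduction}. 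As $r\to0$, decomposing $u$ into the spherical harmonics of $-\Delta_\theta+V_0$ (with eigenvalues $\nu_k^2-(n-2)^2/4$, $\nu_k\geq\nu_0$) and using that $u$ lies in the domain of the Friedrichs realization of $\LL_V$, one gets $u=O(r^{-(n-2)/2+\nu_0})$, hence $v=O(r^{1/2+\nu_0})$; since $w^2\sim\beta r^{\beta-1}$ and $(w')^2\sim c\,r^{\beta-3}$ near $0$, the terms $(w')^2|g|^2$ and $ww'|g|^2$ are $O(r^{\beta-2+2\nu_0})$ and $O(r^{\beta-1+2\nu_0})$ respectively, both integrable near $0$ and tending to $0$ as $r\to0$ exactly when $\beta>1-2\nu_0$.

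The main obstacle is therefore twofold. The first is the polynomial positivity inside \eqref{equ:wcond}: one must organise the computation so that the relevant quantity reduces to a sum of nonnegative monomials in $\beta$ and $r\tau$, which works but is delicate to track. The second, more conceptual, point is pinning down the near-origin asymptotics $v=O(r^{1/2+\nu_0})$ and using them to secure \eqref{equ:funccond} and \eqref{equ:fcond}; this is precisely where the hypothesis $\beta>\max\{0,1-2\nu_0\}$ enters, since for smaller $\beta$ the weighted gradient term already fails to be locally integrable at the origin.
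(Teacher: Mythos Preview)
Your approach is the same as the paper's in essence: apply the one-dimensional weighted Hardy inequality (Lemma~\ref{lem:w-hardy}) with the weight
\[
w(r)=e^{-r\tau}r^{(\beta-1)/2}(\beta+2r\tau)^{1/2},
\]
verify \eqref{equ:wcond}, and then check the integrability and boundary conditions \eqref{equ:funccond}, \eqref{equ:fcond}. Your verification of \eqref{equ:wcond} via $L=(\log w^2)'$ and the explicit polynomial identity is correct (and in fact cleaner than the paper's chain of inequalities).

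There is however one technical difference and one genuine gap.

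\emph{The difference.} You take $g(r)=v(r,\theta)$ for each fixed $\theta$ and integrate over $\mathbb{S}^{n-1}$ afterwards; the paper instead sets
\[
g(r)=\Big(\int_{\mathbb{S}^{n-1}}|v(r,\theta)|^2\,d\theta\Big)^{1/2},
\]
applies Lemma~\ref{lem:w-hardy} once, and uses $|g'(r)|^2\le\int_{\mathbb{S}^{n-1}}|\partial_r v|^2\,d\theta$. The advantage of the paper's choice is that \eqref{equ:funccond} and \eqref{equ:fcond} then only require the \emph{integrated} bounds of Lemma~\ref{lem:uhelmequlv} (namely \eqref{equ:nabluestlv}), never any pointwise-in-$\theta$ regularity of $v$.

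\emph{The gap.} Your justification of \eqref{equ:funccond}--\eqref{equ:fcond} is not complete. First, invoking \eqref{est:reduction} to get $\int w^2|\partial_r v|^2<\infty$ does not work: Lemma~\ref{lem:asmalllv} is only needed when $0<\nu_0\le 1/2$, and in that regime the coefficient $\nu_0^2-\tfrac14$ in \eqref{est:reduction} is nonpositive, so \eqref{est:reduction} does not by itself bound the $|\partial_r v|^2$ integral. Second, the pointwise asymptotic $v(r,\theta)=O(r^{1/2+\nu_0})$ that you extract from the spherical decomposition is only heuristic: summing the Bessel-type modes and obtaining a uniform-in-$\theta$ bound needs a separate argument. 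The paper avoids both issues by appealing directly to \eqref{equ:nabluestlv}, which is proved independently in Lemma~\ref{lem:uhelmequlv}; with the sphere-averaged $g$, that single estimate suffices to verify \eqref{equ:funccond} and, after a small $\epsilon$-shift in $\beta$, \eqref{equ:fcond} as well. If you prefer your per-ray formulation, the clean fix is to use \eqref{equ:nabluestlv} plus Fubini to obtain the needed finiteness for a.e.\ $\theta$, rather than pointwise asymptotics.
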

We postpone the proof of the two lemmas at the end of this section.

Using Lemma \ref{lem:asmalllv} and  \eqref{est:reduction}, we obtain
\begin{equation}\label{est:reduction'}
 \begin{split}
&C\frac{1}{2\epsilon^2}\|r^{\frac{1+\beta}2}f\|_{L^2}^2\\\geq& \frac14(1-\frac{2\epsilon^2}\beta) \int_0^\infty\int_{\mathbb{S}^{n-1}} e^{-2r\tau} r^{\beta-1}\left(\beta+ 2r\tau\right)\frac{|v|^2}{r^2} dr d\theta
\\&+(\nu_0^2-\frac14)\int_0^\infty\int_{\mathbb{S}^{n-1}} e^{-2r\tau} r^{\beta-1}\big((2-\beta)+2r\tau \big) \frac{|v|^2}{r^2}  dr d\theta\\
\geq& \frac\beta 4 (1-\frac{2\epsilon^2}\beta) \int_0^\infty\int_{\mathbb{S}^{n-1}} e^{-2r\tau} r^{\beta-1}\frac{|v(r)|^2}{r^2}\;d\theta\;dr
\\&+(2-\beta)(\nu_0^2-\frac14)\int_0^\infty\int_{\mathbb{S}^{n-1}} e^{-2r\tau} r^{\beta-1} \frac{|v|^2}{r^2}  dr d\theta,
\end{split}
\end{equation}
which implies $$\|r^{-\alpha} u\|_{L^2(\R^n)}\leq C_{\nu_0}\|r^{2-\alpha}f\|_{L^2(\R^n)}, \quad \beta=3-2\alpha$$
provided that
 $$\frac\beta 4>(2-\beta)(\frac14-\nu_0^2)\Leftrightarrow \beta>1-\frac{2\nu_0^2}{1-2\nu_0^2}.$$
 Thus we have shown
\begin{equation}
\|r^{-\alpha}(\LL_V-z^2)^{-1}r^{\alpha-2}\|_{L^2\to L^2}\leq C,\quad 1\leq \alpha<1+\frac{\nu_0^2}{1-2\nu_0^2}.
\end{equation}
Therefore we conclude the proof of  Theorem \ref{thm:w-resolvent} if we could prove Lemma \ref{to0}, Lemma \ref{lem:w-hardy}  and Lemma \ref{lem:asmalllv}.
\end{proof}
\vspace{0.2cm}

To complete the proof, we have to prove the lemmas which will be done in the rest of this section. In the proof, we have to be careful the factor associated with the index $\beta$.

\subsection{ Proof of Lemma \ref{to0}}

Before proving Lemma \ref{to0}, we show the following lemmas.
\begin{lemma}\label{lem:uhelmequlv} Let $\sigma\notin\R^+$ and $f\in \mathcal{C}_0^\infty(\R^n)$. Assume that $u$ is the classical solution to
\begin{equation}\label{helm-eqinblv}
\LL_V u-\sigma u=f.
\end{equation}
Then, $u\in \dot{H}^1(\R^n)$ if $\sigma=0$ and $u\in {H}^1(\R^n)$ if $\sigma\neq 0$, and
\begin{equation}\label{equ:nabluestlv}
\int_{\R^n}\frac{|\partial_r u|^2}{|x|^{1-\beta}}\;dx+\int_{\R^n}\frac{|u|^2}{|x|^{3-\beta}}\;dx<+\infty.
\end{equation}
Here $\beta>\max\{0,1-2\nu_0\}$
where $\nu_0$ is the positive square root of the smallest eigenvalue of the operator $-\Delta_\theta+V_0(\theta)+(n-2)^2/4$.

\end{lemma}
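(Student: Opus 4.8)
The plan is to split the assertion into (a) the global regularity statement $u\in\dot H^1(\R^n)$, resp.\ $u\in H^1(\R^n)$, and (b) the weighted bound \eqref{equ:nabluestlv}, and to prove (b) by treating the origin and infinity separately. For (a) I would first note that, by the positivity hypothesis and separation of variables, $\LL_V$ is a non-negative self-adjoint operator whose quadratic form domain is comparable to $\dot H^1(\R^n)$: decomposing $-\Delta_\theta+V_0(\theta)$ into eigenspaces with eigenvalues $\nu_k^2-(n-2)^2/4$, $\nu_k\ge\nu_0>0$, Hardy's inequality ($n\ge3$) gives $\langle\LL_V w,w\rangle\simeq\|w\|_{\dot H^1}^2$ on the form domain. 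Hence for $\sigma\notin[0,\infty)$ (interpreting $\R^+=(0,\infty)$, which also covers $\sigma=0$ since then $\LL_V^{-1}\colon\dot H^{-1}\to\dot H^1$ and $f\in\mathcal{C}_0^\infty\subset\dot H^{-1}$), $u=(\LL_V-\sigma)^{-1}f$ lies in the form domain, so $u\in\dot H^1$; moreover $u\in L^2$ when $\sigma\ne0$, giving $u\in H^1$. Elliptic regularity away from $0$ is consistent with $u$ being classical.

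For the region $|x|\ge1$ the contribution is finite because $u$ and $\nabla u$ decay rapidly there — exponentially when $\sigma\ne0$ (Combes--Thomas/Agmon estimates, since $\mathrm{Re}\sqrt{-\sigma}>0$) and at the polynomial rate $|x|^{-(n-2)/2-\nu_0}$ when $\sigma=0$ — which dominates the polynomial weights at hand. The heart of the matter is $|x|\le1$. There I would expand $u(r,\theta)=\sum_k u_k(r)Y_k(\theta)$ in the angular eigenbasis, so that each $u_k$ solves $-u_k''-\tfrac{n-1}{r}u_k'+\tfrac{\nu_k^2-(n-2)^2/4}{r^2}u_k=\sigma u_k+f_k$, whose homogeneous solutions near $r=0$ are $r^{-(n-2)/2\pm\nu_k}$ (with a harmless logarithmic correction for the finitely many $k$ with $\nu_k\in\Z_{\ge0}$). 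Since $u\in\dot H^1$, the singular branch $r^{-(n-2)/2-\nu_k}$, which is not locally in $\dot H^1$, is excluded, so $|u_k(r)|\lesssim_k r^{-(n-2)/2+\nu_k}$ and $|u_k'(r)|\lesssim_k r^{-(n-2)/2+\nu_k-1}$ near $0$; substituting and integrating in $\theta$ bounds both weighted integrals over $\{|x|\le1\}$ by $\sum_k C_k\int_0^1 r^{\beta+2\nu_k-2}\,dr$, and each term is finite exactly when $\beta>1-2\nu_k$. Since $\nu_k\ge\nu_0$, the binding constraint comes from $k=0$ and is precisely $\beta>\max\{0,1-2\nu_0\}$.

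The main obstacle is turning this heuristic into a rigorous estimate that is summable in $k$: one needs quantitative, not merely asymptotic, control of $u_k$ near $0$ with constants summable against $\|u\|_{\dot H^1(|x|<1)}^2\simeq\sum_k\int_0^1\bigl(|u_k'|^2+\nu_k^2 r^{-2}|u_k|^2\bigr)r^{n-1}\,dr<\infty$. I expect the cleanest route is to avoid the mode decomposition and instead run a localized weighted-energy identity: multiply the Helmholtz equation by $r^{\beta-1}\chi(r)\,\partial_r\bar u$ with $\chi$ a cutoff near the origin, integrate by parts, and invoke the angular positivity $-\Delta_\theta+V_0+(n-2)^2/4\ge\nu_0^2$ exactly as in the derivation of \eqref{est:v} in the main proof; the only new ingredient is that the boundary term at $r=0$ vanishes, which is guaranteed by $u\in\dot H^1$ and is precisely where the threshold $\beta>1-2\nu_0$ is sharp. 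One also uses that $f\in\mathcal{C}_0^\infty$ renders the source contribution near $0$ trivial, and the logarithmic-resonance modes $\nu_k\in\Z_{\ge0}$ need only a small separate check that, since $\nu_k\ge\nu_0$, does not affect the threshold.
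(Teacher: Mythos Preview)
Your treatment of (a) is correct and matches the paper, which multiplies the equation by $\bar u$, integrates, uses $\|\sqrt{\LL_V}u\|_{L^2}\simeq\|u\|_{\dot H^1}$, and for $\sigma\ne0$ reads off $u\in L^2$ from the real/imaginary parts of the resulting identity.

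For (b) the paper takes a much more direct route that avoids both of your difficulties --- the summability over angular modes near the origin and the need for Agmon-type decay at infinity. The key is the choice of multiplier: rather than the Morawetz-type $r^{\beta-1}\chi\,\partial_r\bar u$ you propose as an alternative (which parallels the main resolvent computation in Theorem~\ref{thm:w-resolvent}), the paper multiplies \eqref{helm-eqinblv} by the Hardy-type multiplier $r^{\beta-1}\bar u$ on all of $\R^n$. Taking real parts yields
\[
\int\frac{|\partial_r u|^2}{r^{1-\beta}}+\int\frac{|\nabla_\theta u|^2}{r^{3-\beta}}+\int\Bigl(V_0(\theta)+\tfrac{(1-\beta)(n-3+\beta)}{2}\Bigr)\frac{|u|^2}{r^{3-\beta}}=\mathrm{Re}\int\frac{f\bar u}{r^{1-\beta}}+\sigma_1\int\frac{|u|^2}{r^{1-\beta}},
\]
the last term being controlled by the already-established $H^1$ bound (and harmless when $\sigma_1\le0$). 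Angular positivity \eqref{lowb} turns the middle two integrals into $\bigl(\nu_0^2-\tfrac{(n-2)^2}{4}+\tfrac{(1-\beta)(n-3+\beta)}{2}\bigr)\int \frac{|u|^2}{r^{3-\beta}}$, and a modified radial Hardy inequality
\[
\int\frac{|u|^2}{r^{3-\beta}}\,r^{n-1}\,dr\,d\theta\le\frac{4}{(n-3+\beta)^2}\int\frac{|\partial_r u|^2}{r^{1-\beta}}\,r^{n-1}\,dr\,d\theta
\]
(proved as a separate short lemma from the sharp one-dimensional Hardy inequality) closes the estimate exactly when $\nu_0^2>(1-\beta)^2/4$, i.e.\ $\beta>1-2\nu_0$. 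There is no region-splitting, no Combes--Thomas input, no mode-by-mode ODE analysis. Your eigenfunction expansion could in principle be made summable, but it becomes genuinely delicate once the inhomogeneous term $\sigma u_k$ is present near $0$; and your Morawetz alternative, while viable, duplicates the machinery of the main theorem for a result that yields to a one-line multiplier identity plus a weighted Hardy inequality.
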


To prove Lemma \ref{lem:uhelmequlv}, we first show the modified Hardy inequality.
\begin{lemma}\label{lem:w-hardy'} Let $k\neq\frac{n}{2}$. There holds
\begin{equation}\label{equ:modhardylv}
 \int_{0}^\infty\int_{\mathbb{S}^{n-1}}\frac{|f|^2}{r^{2k}}\;r^{n-1}drd\theta\leq\frac{4}{(n-2k)^2} \int_{0}^\infty\int_{\mathbb{S}^{n-1}} \frac{|\partial_r f|^2}{r^{2k-2}}\;r^{n-1}drd\theta.
\end{equation}
\end{lemma}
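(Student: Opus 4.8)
The plan is to reduce the stated inequality to a one–dimensional weighted Hardy inequality in the radial variable, treating the angular variable merely as a parameter. Since $\partial_r$ acts only in $r$ and the estimate \eqref{equ:modhardylv} is obtained by integrating a pointwise (in $\theta$) inequality over $\mathbb{S}^{n-1}$ with the measure $d\theta$, it suffices to prove, for each fixed $\theta\in\mathbb{S}^{n-1}$ and $g(r):=f(r,\theta)$,
\begin{equation*}
\int_0^\infty |g(r)|^2\, r^{\,n-1-2k}\,dr\;\le\;\frac{4}{(n-2k)^2}\int_0^\infty |g'(r)|^2\, r^{\,n+1-2k}\,dr .
\end{equation*}
Writing $m:=n-2k$, which is nonzero exactly because of the hypothesis $k\neq n/2$, this becomes $\int_0^\infty|g|^2r^{m-1}\,dr\le \tfrac{4}{m^2}\int_0^\infty|g'|^2r^{m+1}\,dr$, where the crucial point $m\neq0$ is what makes the weight-shifting identity below meaningful.

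The one–dimensional estimate follows from the classical integration–by–parts argument. Using $r^{m-1}=\tfrac1m(r^m)'$ and integrating by parts on a truncated interval $(\varepsilon,R)\subset(0,\infty)$, on which $r\mapsto r^m$ is smooth, gives
\begin{equation*}
\int_\varepsilon^R |g|^2 r^{m-1}\,dr=\frac1m\Big[\,|g|^2 r^m\,\Big]_\varepsilon^R-\frac2m\int_\varepsilon^R \mathrm{Re}\big(g'\bar g\big)\,r^m\,dr .
\end{equation*}
Letting $\varepsilon\to0$, $R\to\infty$, the boundary contributions vanish for the class of functions to which the lemma is applied: the term at $R$ disappears by compact support (or decay), and the term at $\varepsilon$ vanishes since $m>0$ in the relevant regime and $|g|$ is bounded near $0$, so $|g(\varepsilon)|^2\varepsilon^{m}\to0$. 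One is then left with $\int_0^\infty |g|^2 r^{m-1}\,dr=-\tfrac2m\int_0^\infty \mathrm{Re}(g'\bar g)r^m\,dr$; bounding $|\mathrm{Re}(g'\bar g)|\le|g'|\,|g|$ and applying Cauchy–Schwarz to $\int_0^\infty (|g|\,r^{(m-1)/2})(|g'|\,r^{(m+1)/2})\,dr$ yields
\begin{equation*}
\int_0^\infty |g|^2 r^{m-1}\,dr\le\frac{2}{|m|}\Big(\int_0^\infty |g|^2 r^{m-1}\,dr\Big)^{1/2}\Big(\int_0^\infty |g'|^2 r^{m+1}\,dr\Big)^{1/2}.
\end{equation*}
Dividing by the finite factor $\big(\int_0^\infty|g|^2r^{m-1}\,dr\big)^{1/2}$ and squaring produces the constant $4/m^2=4/(n-2k)^2$, and since $m-1=n-1-2k$ and $m+1=n-1-(2k-2)$ this is exactly the desired inequality; integrating in $\theta$ recovers \eqref{equ:modhardylv}.

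The only genuinely delicate step is the vanishing of the boundary term at the origin. If $n-2k<0$ the left-hand side of \eqref{equ:modhardylv} can in fact be infinite for a generic smooth compactly supported $f$ with $f(0)\neq0$, so the statement must be understood either with $f$ vanishing near $0$, or — as in its application to Lemma \ref{lem:uhelmequlv}, where $2k=3-\beta$ and hence $n-2k=n-3+\beta>0$ for $n\ge3$ and $\beta>0$ — in the regime $n-2k>0$, in which case the truncation argument above goes through verbatim. A clean way to make everything rigorous, if one wishes to avoid truncation bookkeeping, is to establish the inequality first for $f\in\mathcal{C}_0^\infty(\R^n\setminus\{0\})$, where all boundary terms are identically zero, and then extend to the admissible class by density in the norm defined by the right-hand side of \eqref{equ:modhardylv}.
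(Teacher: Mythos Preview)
Your proof is correct and takes a genuinely different route from the paper. You prove the one--dimensional weighted Hardy inequality directly: write $r^{m-1}=\tfrac1m(r^m)'$ with $m=n-2k$, integrate by parts, and close with Cauchy--Schwarz. The paper instead reduces to the \emph{unweighted} sharp Hardy inequality $\int_0^\infty |h|^2 r^{n-3}\,dr\le \tfrac{4}{(n-2)^2}\int_0^\infty |\partial_r h|^2 r^{n-1}\,dr$ applied to $h=f/r^{k-1}$, then expands $|\partial_r(f/r^{k-1})|^2$, integrates the cross term by parts, and algebraically identifies $(n-2)^2/4-(k-1)(n-k-1)=(n-2k)^2/4$ to recover the constant. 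Your argument is more self--contained (no appeal to an external sharp Hardy inequality) and makes the role of the hypothesis $k\neq n/2$ transparent; the paper's argument has the minor advantage of displaying the weighted inequality as a corollary of the classical one via a scaling substitution. Your discussion of the boundary term at the origin is more careful than the paper's, and your observation that the actual application has $2k=3-\beta$, hence $n-2k=n-3+\beta>0$, is exactly the regime in which both proofs go through without qualification.
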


\begin{proof}
First, by the sharp Hardy's inequality \cite{KSWW}, we have
  \begin{align}\label{equ:byharslv}
    \int_{0}^\infty \frac{|f(r,\theta)|^2}{r^{2k}}\;r^{n-1}dr\leq&\frac{4}{(n-2)^2}\int_{0}^\infty \Big|\partial_r\Big(\frac{f(r,\theta)}{r^{k-1}}\Big)\Big|^2\;r^{n-1}dr\\\nonumber
    =&\frac{4}{(n-2)^2}\int_{0}^\infty \frac{1}{r^{2k-2}}\big|\partial_r f-(k-1)f(r,\theta)r^{-1}\big|^2 r^{n-1}\;dr.
  \end{align}
Noting that
$$\big|\partial_r f-(k-1)r^{-1}f\big|^2=|\partial_r f|^2-2(k-1)r^{-1}\cdot{\rm Re}\big(\bar{f}\partial_r f\big)+(k-1)^2\tfrac{|f|^2}{r^2},$$
we get
\begin{align*}
  &\int_{0}^\infty \frac{1}{r^{2k-2}}\big|\partial_r f-(k-1)f(r,\theta)r^{-1}\big|^2 r^{n-1}\;dr\\
  =&\int_{0}^\infty \left(|\partial_r f|^2-2(k-1)r^{-1}\cdot{\rm Re}\big(\bar{f}\partial_r f\big)+(k-1)^2\tfrac{|f|^2}{r^2}\right)\;r^{n-1}dr\\
  =&\int_{0}^\infty \frac{|\partial_r f|^2}{r^{2k-2}}\;r^{n-1}dr+(k-1)(n-k-1)\int_{0}^\infty \frac{|f|^2}{r^{2k}}\;r^{n-1}dr.
\end{align*}
  Plugging this into \eqref{equ:byharslv} yields
 \begin{equation}
 \begin{split}
  &\int_{0}^\infty \frac{|f(r,\theta)|^2}{r^{2k}}\;r^{n-1}dr \\&\leq\frac{1}{(n-2)^2/4-(k-1)(n-k-1)}\int_{0}^\infty \frac{|\partial_r f|^2}{r^{2k-2}}\;r^{n-1}dr\\
  &=\frac{4}{(n-2k)^2}\int_{0}^\infty \frac{|\partial_r f|^2}{r^{2k-2}}\;r^{n-1}dr,
  \end{split}
  \end{equation}
 and so   \eqref{equ:modhardylv} follows. Therefore, we integrate on ${\mathbb{S}^{n-1}}$ to conclude the proof of Lemma \ref{lem:w-hardy'}.

\begin{proof}[{\bf Proof of Lemma \ref{lem:uhelmequlv}:}]
  We first consider the case $\sigma=0$, that is, $u$ solves
\begin{equation}\label{equ:sigma0lv}
  -\Delta u+r^{-2}V_0(\theta)u=f.
\end{equation}
Multiplying the above equality by $\bar{u}$ and integrating on $\R^n$, we obtain
$$\int_{\R^n}\Big(|\nabla u|^2+V_0(\theta)r^{-2}|u|^2\Big)\;dx={\rm Re}\int_{\R^n}f\bar{u}\;dx.$$
By Young's inequality and \cite[Proposition 1]{BPSS}, we have
\begin{align*}
 \|u\|_{\dot{H}^1}^2\sim\big\|\sqrt{\LL_V}u\big\|_{L^2}^2&=\int_{\R^n}\Big(|\nabla u|^2+V_0(\theta)r^{-2}|u|^2\Big)\;dx\\& \leq  C(\epsilon)\int_{\R^n} r^2 |f|^2\;dx+\epsilon \int_{\R^n}|\nabla u|^2\;dx.
\end{align*}
Hence,
$$\int_{\R^n}|\nabla u|^2\;dx\leq  C\int_{\R^n}r^2 |f|^2\;dx,$$
and so $u\in\dot{H}^1$.

Next, multiplying \eqref{equ:sigma0lv} by $\tfrac{\bar{u}}{r^{1-\beta}}$, integrating in $\R^n$ and taking real part, we obtain
\begin{align*}
\int_{0}^\infty \int_{\mathbb{S}^{n-1}} \frac{|\nabla u|^2}{r^{1-\beta}}\;r^{n-1}drd\theta-\frac12\int_{0}^\infty \int_{\mathbb{S}^{n-1}} |u|^2\Delta\big(r^{\beta-1}\big)\;r^{n-1}drd\theta\\ +\int_{0}^\infty \int_{\mathbb{S}^{n-1}}\frac{V_0(\theta)}{r^{3-\beta}}|u|^2\;r^{n-1}dr d\theta
={\rm Re}\int_{0}^\infty \int_{\mathbb{S}^{n-1}} \frac{f\bar{u}}{r^{1-\beta}}\;r^{n-1}drd\theta,
\end{align*}
which implies
\begin{align*}
&\int_{0}^\infty \int_{\mathbb{S}^{n-1}} \frac{|\nabla u|^2}{r^{1-\beta}}\;r^{n-1}drd\theta+\int_{0}^\infty \int_{\mathbb{S}^{n-1}} \big(V_0(\theta)+\tfrac{(1-\beta)(n-3+\beta)}2\big)\frac{|u|^2}{r^{3-\beta}}\;r^{n-1}drd\theta\\
&={\rm Re}\int_{0}^\infty \int_{\mathbb{S}^{n-1}}\frac{f\bar{u}}{r^{1-\beta}}\;r^{n-1}drd\theta.\end{align*}
Noting that $\nabla=(\partial_r, r^{-1}\nabla_\theta)$, \eqref{lowb} with $u=v$ implies
\begin{align*}
   & \int_{0}^\infty \int_{\mathbb{S}^{n-1}}  \frac{|\partial_r u|^2}{r^{1-\beta}}\;r^{n-1}drd\theta+\int_{0}^\infty \int_{\mathbb{S}^{n-1}} \big(\nu_0^2-\frac{(n-2)^2}4+\tfrac{(1-\beta)(n-3+\beta)}2\big)\frac{|u|^2}{r^{3-\beta}}\;r^{n-1}drd\theta \\
  \leq & C(\epsilon)\int_{0}^\infty \int_{\mathbb{S}^{n-1}}  |f|^2r^{1+\beta}\;r^{n-1}drd\theta+\epsilon \int_{0}^\infty \int_{\mathbb{S}^{n-1}} \frac{|u|^2}{r^{3-\beta}}\;r^{n-1}drd\theta .
\end{align*}
Using Lemma \ref{lem:w-hardy'} with $2k=3-\beta$, one has
$$\int_{0}^\infty \int_{\mathbb{S}^{n-1}} \frac{|u|^2}{r^{3-\beta}}\;r^{n-1}drd\theta\leq\frac{4}{(n-3+\beta)^2}\int_{0}^\infty \int_{\mathbb{S}^{n-1}} \frac{|\partial_r u|^2}{r^{1-\beta}}\;r^{n-1}drd\theta.$$
Hence, for
\begin{equation}\label{equ:acondbetalv}
  \nu_0^2-\frac{(n-2)^2}4+\tfrac{(1-\beta)(n-3+\beta)}2>-\tfrac{(n-3+\beta)^2}{4}, \Leftrightarrow \nu_0^2>\tfrac{(1-\beta)^2}4,
\end{equation} there holds
\begin{equation}\label{equ:anotsmalv}
\begin{split}
  &\int_{0}^\infty \int_{\mathbb{S}^{n-1}}  \frac{|\partial_r u|^2}{r^{1-\beta}}\;r^{n-1}drd\theta+\int_{0}^\infty \int_{\mathbb{S}^{n-1}} \frac{|u|^2}{r^{3-\beta}}\;r^{n-1}drd\theta \\&\leq C\int_{0}^\infty \int_{\mathbb{S}^{n-1}}  |f|^2r^{1+\beta}\;r^{n-1}drd\theta.
  \end{split}
\end{equation}

Next we consider the case $\sigma\neq0$.
Multiplying \eqref{helm-eqinblv} by $\bar{u}$ and integrating in $\R^n$, let $\sigma=\sigma_1+i\sigma_2$, we get
  \begin{align}\label{equ:reparteslv}
 \int_{\R^n}\Big(|\nabla u|^2+V_0(\theta)r^{-2}|u|^2\Big)\;dx-\sigma_1\int_{\R^n}|u|^2\;dx=&{\rm Re}\int_{\R^n}f\bar{u}\;dx,\\\label{equ:impartslv}
  -\sigma_2\int_{\R^n}|u|^2\;dx=&{\rm Im}\int_{\R^n}f\bar{u}\;dx.
  \end{align}

{\bf Case 1: $\sigma_2\neq0$.} It follows from \eqref{equ:impartslv} that
$$|\sigma_2|\int_{\R^n}|u|^2\;dx\leq\frac2{|\sigma_2|}\int_{\R^n}|f|^2\;dx+\frac{|\sigma_2|}4\int_{\R^n}|u|^2\;dx\Rightarrow~u\in L^2(\R^n).$$
Combining this with \eqref{equ:reparteslv}, we obtain
  \begin{align}\label{equ:reparteslv'}
 \|\nabla u\|_{L^2(\R^n)}^2\sim \|\sqrt{\LL_V}u\|_{L^2}^2&=\int_{\R^n}\Big(|\nabla u|^2+V_0(\theta)r^{-2}|u|^2\Big)\;dx
 \\&=\sigma_1\int_{\R^n} |u|^2\;dx-{\rm Re}\int_{\R^n} f\bar{u}\; dx<\infty  \end{align}
Hence $u\in\dot{H}^1.$
Multiplying \eqref{helm-eqinblv} by $\frac{\bar{u}}{r^{1-\beta}}$ and integrating in $\R^n$, we get
\begin{align}\label{equ:reaa1lv}
&\int_{0}^\infty \int_{\mathbb{S}^{n-1}} \frac{|\nabla u|^2}{r^{1-\beta}}\;r^{n-1}drd\theta+\int_{0}^\infty \int_{\mathbb{S}^{n-1}} \big(V_0(\theta)+\tfrac{(1-\beta)(n-3+\beta)}2\big)\frac{|u|^2}{r^{3-\beta}}\;r^{n-1}drd\theta\\ \nonumber
&-\sigma_1\int_{0}^\infty\int_{\mathbb{S}^{n-1}} \frac{|u|^2}{r^{1-\beta}}\;r^{n-1}drd\theta ={\rm Re}\int_{0}^\infty \int_{\mathbb{S}^{n-1}}\frac{f\bar{u}}{r^{1-\beta}}\;r^{n-1}drd\theta.\end{align}
  and
  \begin{equation}\label{equ:imaa2lv}
    -\sigma_2\int_{0}^\infty\int_{\mathbb{S}^{n-1}} \frac{|u|^2}{r^{1-\beta}}\;r^{n-1}drd\theta={\rm Im}\int_{0}^\infty\int_{\mathbb{S}^{n-1}}\ \frac{f\bar{u}}{r^{1-\beta}}\;r^{n-1}drd\theta.
  \end{equation}
This together with Young's inequality yields $\int_{\R^n}\frac{|u|^2}{r^{1-\beta}}\;dx<+\infty.$ Applying this fact to \eqref{equ:reaa1lv}, and by the same argument as \eqref{equ:anotsmalv}, we obtain that
 if $\nu_0^2>\tfrac{(1-\beta)^2}4,$ there holds
\begin{equation*}
\begin{split}
  &\int_{0}^\infty \int_{\mathbb{S}^{n-1}}  \frac{|\partial_r u|^2}{r^{1-\beta}}\;r^{n-1}drd\theta+\int_{0}^\infty \int_{\mathbb{S}^{n-1}} \frac{|u|^2}{r^{3-\beta}}\;r^{n-1}drd\theta \\&\leq C\int_{0}^\infty \int_{\mathbb{S}^{n-1}}  |f|^2r^{1+\beta}\;r^{n-1}drd\theta.
  \end{split}
\end{equation*}

{\bf Case 2: $\sigma_2=0$.} In this case, we have $\sigma_1<0$ due to $\sigma\notin \R^+$. Using  \eqref{equ:reparteslv}, we obtain $u\in{H}^1.$
So \eqref{equ:nabluestlv} follows from \eqref{equ:reaa1lv}.
\end{proof}

With Lemma \ref{lem:uhelmequlv} in hand, we now prove Lemma \ref{to0}. Note the fact that the support of $\phi'(r)$ is compact and belongs to $[0,m]\cup[M, M+1]$. One has $0\leq \phi'\leq C/m$ on $[0,m]$ and $-C\leq \phi'\leq 0$ on $[M,M+1]$.
Thus by \eqref{lowb} it suffices to show the negative terms
\begin{equation}\label{M0}
\int_M^{M+1}\int_{\mathbb{S}^{n-1}} r^\beta e^{-2r\tau}\left(|\partial_r v|^2+\frac{1}{4r^2}|v|^2\right) dr d\theta\to 0, \quad \text{as}~ M\to \infty;
\end{equation}
and
\begin{equation}\label{m0}
\int_0^{m}\int_{\mathbb{S}^{n-1}} r^\beta e^{-2r\tau}\frac1{r^2}\left(|\nabla_\theta v|^2+(V_0(\theta)+\frac{(n-2)^2}4)|v|^2\right) dr d\theta\to 0, \quad \text{as} ~m\to 0.
\end{equation}
It is  enough to show that there exists a
sequence $M_n\to\infty$ along which \eqref{M0} holds. We note that
$$\partial_rv=r^\frac{n-1}{2}e^{rz}\big(\partial_ru+zu+\tfrac{n-1}{2r}u\big).$$
By using the modified Hardy inequality \eqref{equ:modhardylv}, we have 
\begin{align*}
&\int_M^{M+1}\int_{\mathbb{S}^{n-1}} r^\beta e^{-2r\tau}\left(|\partial_r v|^2+\frac{1}{4r^2}|v|^2\right) dr d\theta\\
\leq&C\int_{M}^{M+1}\int_{{\mathbb{S}^{n-1}}}r^{\beta} \big(|\partial_ru|^2+|z|^2|u|^2\big)\;d\theta\; r^{n-1}\;dr\\
\leq&C(n,|z|)\int_{M}^{M+1} r^\beta g(r)\;dr,
\end{align*}
where
$$g(r)=r^{n-1}\int_{{\mathbb{S}^{n-1}}}\big(|\partial_ru|^2+|u|^2\big)\;d\theta.$$
By Lemma \ref{lem:uhelmequlv}, we get
$$\int_0^\infty g(r)\;dr<+\infty.$$
It thus follows that, given $\mu_j>0$, there exists a sequence $M_n^{(j)}\to\infty$ such that
$$\int_{M_n^{(j)}}^{M_n^{(j)}+1}g(r)\;dr<\frac{\mu_j}{M_n^{(j)}},$$
because otherwise the integral $\int_0^\infty g(r)\;dr$ would diverge. Using a diagonal argument
it thus follows that there exists a sequence $M_n\to\infty$ such that for $\beta\leq1$ (i.e. $\alpha\geq1$)
$$\int_{M_n}^{M_n+1}r^\beta g(r)\;dr\to0\quad \text{as}\quad n\to\infty,$$
which implies \eqref{M0} along a sequence.

On the other hand, using Lemma \ref{lem:uhelmequlv}, we have for $\beta\geq0$
\begin{align*}
&\int_0^{m}\int_{{\mathbb{S}^{n-1}}}  r^{\beta-2}e^{-2r\tau}\left(|\nabla_{\theta}  v|^2+|v|^2\right) dr d\theta\\
\leq&\int_0^{m}\int_{{\mathbb{S}^{n-1}}}  r^{-2}\left(|\nabla_{\theta}  u|^2+|u|^2\right)\;d\theta\;r^{n-1} \;dr \\
\lesssim&\int_{r\leq m}\left(|\nabla u|^2+\frac{|u|^2}{r^{2}}\right)\;dx\\
\to&0\quad\text{as}\quad m\to0.
\end{align*}

\end{proof}

\subsection{Proof of Lemmas \ref{lem:w-hardy} and  \ref{lem:asmalllv}}

\begin{proof}[{\bf The proof of Lemma \ref{lem:w-hardy}}]
This is a modification of the weighted Hardy inequality in \cite[Lemma 2.2]{BPST}. We just modify the argument in \cite{BPST} to prove it.
Let the operator $G$ be defined as
$$G:=\frac1i\big(w\partial_r+\tfrac12w'\big).$$
It follows from \eqref{equ:fcond} that there exists a sequence $\{r_j\}_j:~r_j\to0$ such that
$$\lim_{j\to\infty}w(r_j)w'(r_j)|g(r_j)|^2=0.$$
This together with \eqref{equ:funccond} and \eqref{equ:wcond} with $ww'\leq 0$ yields that
\begin{align*}
  \|Gg\|_{L^2(\R^+)}^2= &\int_0^\infty \big(w^2|g'|^2+\tfrac14(w')^2|g|^2+\tfrac12ww'\partial_r|g|^2\big)\;dr \\
  = & \lim_{j\to\infty}\bigg\{\int_{r_j}^\infty\big(w^2|g'|^2+\tfrac14(w')^2|g|^2-\tfrac12\partial_r(ww')|g|^2\big)\;dr
  +\tfrac12w(r)w'(r)|g(r)|^2\Big|_{r=r_j}^{r=+\infty}\bigg\}\\
  \leq&\int_0^\infty\Big(w^2|g'|^2-\big(\tfrac14(w')^2+\tfrac12ww''\Big)|g|^2\big)\;dr.
\end{align*}
On the other hand,  for the function $m(r)=-\frac{w(r)}{2r}$, a simple computation shows that
\begin{align}\nonumber
  0\leq & \big\|(G-im)g\big\|_{L^2(\R^+)}^2 \\\nonumber
  = & \|Gg\|_{L^2(\R^+)}^2-\big\langle (w m'-m^2)g,g\big\rangle\\\nonumber
  \leq&\int_0^\infty\big(w^2|g'|^2-\big(\tfrac14(w')^2|g|^2+\tfrac12ww''\big)|g|^2\big)\;dr
  -\big\langle (w m'-m^2)f,f\big\rangle\\\label{equ:fm}
  =&\int_0^\infty\Big(w^2|g'|^2-\big(\tfrac14(w')^2+\tfrac12ww''+w m'-m^2\big)|g|^2\Big)\;dr.
\end{align}
Noting that $m=-\frac{w}{2r}$, we have by \eqref{equ:wcond}
$$\tfrac14(w')^2+\tfrac12ww''+w m'-m^2=\tfrac14(w')^2+\tfrac12ww''-\tfrac{ww'}{2r}+\tfrac{w^2}{4r^2}
\geq\tfrac{w^2}{4r^2}.$$
Plugging this into \eqref{equ:fm}, we obtain
$$\int_0^\infty w^2\frac{|g|^2}{r^2}\;dr\leq 4\int_0^\infty w^2|g'(r)|^2\;dr.$$

\end{proof}

\begin{proof}[{\bf The proof of Lemma \ref{lem:asmalllv}:}]  Let
\begin{equation}\label{equ:wdef}
  w(r)=e^{-r\tau}r^{\frac{\beta-1}2}(\beta+2\tau r)^{1/2}.
\end{equation}
We first verify the assumption \eqref{equ:wcond} on $w(r)$ when $0<\beta\leq1$. A simple computation shows
\begin{equation}\label{equ:deri}
  w'(r)=e^{-r\tau}r^{(\beta-1)/2}(\beta+2\tau r)^{-1/2}[-2\tau^2r+\tfrac12(\beta-1)\beta r^{-1} ]\leq0
\end{equation}
for $0<\beta\leq1.$ Secondly we have
\begin{align*}
  w''(r)= & e^{-r\tau}r^{(\beta-1)/2}(\beta+2\tau r)^{-3/2}[-2\tau^2r+\tfrac12(\beta-1)\beta r^{-1} ]^2\\
+&e^{-r\tau}r^{(\beta-1)/2} (\beta+2\tau r)^{-3/2}[-2\tau(\beta-1)\beta r^{-1} -2\tau^2\beta-\tfrac12(\beta-1)\beta^2 r^{-2}]
\end{align*}
which implies
\begin{align*}
 &w'(r)^2+2w(r)w''(r)\\=&e^{-2r\tau}r^{\beta-1}(\beta+2\tau r)^{-1}[-2\tau^2r+\frac12(\beta-1)\beta r^{-1} ]^2\\
&+e^{-2r\tau}r^{\beta-1}(\beta+2\tau r)^{-3/2}[-2\tau^2r+\frac12(\beta-1)\beta r^{-1} ]^2\\
&+e^{-2r\tau}r^{\beta-1} (\beta+2\tau r)^{-3/2}[-2\tau(\beta-1)\beta r^{-1} -2\tau^2\beta-\frac12(\beta-1)\beta^2 r^{-2}]\\=&e^{-2r\tau}r^{\beta-1}(\beta+2\tau r)^{-1}[-2\tau^2r+\frac12(\beta-1)\beta r^{-1} ]^2\\
&+e^{-2r\tau}r^{\beta-1}(\beta+2\tau r)^{-3/2}[-2\tau^2r+\frac12(\beta-1)\beta r^{-1} ]^2\\
&+e^{-2r\tau}r^{\beta-1} (\beta+2\tau r)^{-3/2}[-2\tau(\beta-1)\beta r^{-1} -2\tau^2\beta-\frac12(\beta-1)\beta^2 r^{-2}]\\
\geq& -2\tau^2\beta e^{-2r\tau}r^{\beta-1} (\beta+2\tau r)^{-3/2}.
\end{align*}
In addition, we have
\begin{align*}
-2w(r)w'(r)&=e^{-2r\tau}r^{\beta-1}(\beta+2\tau r)^{-1/2}[4\tau^2r-(\beta-1)\beta r^{-1} ]\\
&=e^{-2r\tau}r^{\beta-1}(\beta+2\tau r)^{-3/2}[4\tau^2r(\beta+2\tau r)-(\beta-1)\beta r^{-1}(\beta+2\tau r) ]
\\
&=e^{-2r\tau}r^{\beta-1}(\beta+2\tau r)^{-3/2}[4\tau^2\beta r+8\tau^3 r^2-(\beta-1)\beta r^{-1}(\beta+2\tau r) ]
\\
&\geq 4\tau^2\beta r e^{-2r\tau}r^{\beta-1}(\beta+2\tau r)^{-3/2}.
\end{align*}
Hence,  for $0<\beta\leq1$, we obtain
\begin{align*}
&r\left(w'(r)^2+2w(r)w''(r)\right)-2w(r)w'(r)\geq 2\tau^2\beta r e^{-2r\tau}r^{\beta-1}(\beta+2\tau r)^{-3/2}\geq0.
\end{align*}
Let
\begin{equation}\label{equ:fdef}
  g(r)=\Big(\int_{{\mathbb{S}^{n-1}}}|v(r,\theta)|^2\;d\theta\Big)^\frac12.
\end{equation}
Then, we have
$$g'(r)=\Big(\int_{{\mathbb{S}^{n-1}}}|v(r,\theta)|^2\;d\theta\Big)^{-\frac12}\int_{{\mathbb{S}^{n-1}}}{\rm Re}(v\partial_rv)\;d\theta$$
and
$$|g'(r)|^2\leq\int_{{\mathbb{S}^{n-1}}}|\partial_rv|^2\;d\theta.$$
Next we need to verify the assumption \eqref{equ:funccond} and \eqref{equ:fcond} on $g$.

For the choice of $w$ as in \eqref{equ:wdef} and $g$ as in \eqref{equ:fdef}, we have
\begin{align*}
   & \int_0^\infty \big((w')^2|g|^2+w^2|g'|^2\big)\;dr\\
   \leq &\int_0^\infty  e^{-2r\tau}r^{\beta-1}(\beta+2\tau r)^{-1}[-2\tau^2r+\tfrac12(\beta-1)\beta r^{-1}]^2\int_{{\mathbb{S}^{n-1}}}|v(r,y)|^2\;d\theta\;dr\\
   &+\int_0^\infty  e^{-2r\tau}r^{\beta-1}(\beta+2\tau r)\int_{{\mathbb{S}^{n-1}}}|\partial_rv|^2\;d\theta\;dr.
\end{align*}
Recall that
 $$v(r,y)=r^\frac{n-1}{2} e^{rz}u(r,y),$$
and
$$\partial_rv=r^\frac{n-1}{2}e^{rz}\big(\partial_ru+zu+\tfrac{n-1}{2r}u\big),$$
hence
\begin{align}\nonumber
  & \int_0^\infty \big((w')^2|g|^2+w^2|g'|^2\big)\;dr\\\label{equ:bound1}
  \leq& C(\tau)\int_0^\infty\int_{\mathbb{S}^{n-1}} \Big(\frac{|u|^2}{r^{3-\beta}}+\frac{|\partial_ru|^2}{r^{1-\beta}}\Big)\;d\theta r^{n-1}\;dr\\\label{equ:bound2}
  &+C(\tau)\int_0^\infty\int_{\mathbb{S}^{n-1}} \Big(\frac{|u|^2}{r^{-\beta}}+\frac{|\partial_ru|^2}{r^{-\beta}}\Big)\;d\theta r^{n-1}\;dr.
\end{align}
The boundedness of \eqref{equ:bound1} and \eqref{equ:bound2} follow from \eqref{equ:nabluestlv}. \vspace{0.1cm}

Now we verify \eqref{equ:fcond}. For the choice of  $w$ as in \eqref{equ:wdef} and $g$ as in \eqref{equ:fdef}, we have
 \begin{align*}
   w(r)w'(r)|g(r)|^2 =& e^{-2r\tau}r^{\beta-1}[-2\tau^2r+\tfrac12(\beta-1)\beta r^{-1}]|g(r)|^2.
 \end{align*}
We are reduced to show
that $$\liminf_{r\to0}\tilde{g}(0)=0$$ with
\begin{equation}\label{equ:gdef}
  \tilde{g}(r):=r^\frac{\beta-2}{2}f(r)=r^\frac{\beta-2}{2}\Big(\int_{{\mathbb{S}^{n-1}}}|v(r,y)|^2\;d\theta\Big)^\frac12.
\end{equation}
Recall that
$$v(r,y)=r^\frac{n-1}{2} e^{rz}u(r,y).$$
For the above $\beta\in(\max\{0,1-2\nu_0\},1]$, we can choose $\beta_0\in(\max\{0,1-2\nu_0\},1)$ such that $\beta_0<\beta$, and let $\epsilon=\beta-\beta_0.$

Moreover,  by using \eqref{equ:nabluestlv} with $\beta_0\in(\max\{0,1-2\nu_0\},1)$, we have
\begin{align}\nonumber
\int_0^1\frac{|\tilde{g}(r)|^2}{r^{1+\epsilon}}\;dr=&\int_0^1 r^{\beta-3-\epsilon}\int_{\mathbb{S}^{n-1}}|v(r,y)|^2\;d\theta\;dr\\\nonumber
=&\int_0^1\int_{{\mathbb{S}^{n-1}}}e^{2r\tau}\frac{|u(r,y)|^2}{r^{3-\beta_0}}\;d\theta\;r^{n-1}\;dr\\\label{equ:gzr}
\leq& C(\tau)\int_{r\leq1}\frac{|u|^2}{r^{3-\beta_0}}\;dx<+\infty
\end{align}
which shows that
\begin{equation}\label{equ:g0dui}
  \liminf_{r\to0}\tilde{g}(r)=0,
\end{equation}
 otherwise the integral $\int_0^1\frac{|\tilde{g}(r)|^2}{r^{1+\epsilon}}\;dr$ diverges.

Therefore we have verified the condition of Lemma \ref{lem:w-hardy}. By Lemma \ref{lem:w-hardy}, we obtain
\begin{align*}
  \int_0^\infty\int_{\mathbb{S}^{n-1}} e^{-2r\tau} r^{\beta-1}\left(\beta+ 2r\tau\right) \frac{|v|^2}{r^2}  dr d\theta=&\int_0^\infty w^2 \frac{|g|^2}{r^2}\;dr\\
  \leq&4\int_0^\infty w^2 |g'(r)|^2\;dr\\
  \leq&4\int_0^\infty e^{-2r\tau}r^{\beta-1}(\beta+2\tau r)\int_{{\mathbb{S}^{n-1}}}|\partial_rv|^2\;d\theta\;dr\\
  =&4\int_0^\infty \int_{{\mathbb{S}^{n-1}}}e^{-2r\tau}r^{\beta-1}(\beta+2\tau r)|\partial_rv|^2\;d\theta\;dr,
\end{align*}
which implies \eqref{equ:asmalllv}, and so we conclude the proof of Lemma \ref{lem:asmalllv}.
\end{proof}

\section{The Sobolev inequality and inhomogeneous Strichartz estimate}

In this section,  we prove Theorem \ref{thm-unf-sob} and Theorem \ref{thm-in}.

\subsection{The proof of Theorem \ref{thm-unf-sob}} We set $ \mathcal R_0(\sigma)=(-\Delta-\sigma)^{-1}$ and $\mathcal R(\sigma)=(\LL_V-\sigma)^{-1}$. The proof follows a similar line as in \cite{Miz1} based on the iterated resolvent identity
\begin{align}
\label{proof_theorem_1_2_1}
\mathcal R(\sigma)=\mathcal R_0(\sigma)-\mathcal R_0(\sigma)V\mathcal R_0(\sigma)+\mathcal R_0(\sigma)V\mathcal R(\sigma)V\mathcal R_0(\sigma)
\end{align}
which follows from the standard resolvent formulas
$$
\mathcal R(\sigma)=\mathcal R_0(\sigma)-\mathcal R_0(\sigma)V\mathcal R(\sigma)=\mathcal R_0(\sigma)-\mathcal R(\sigma)V\mathcal R_0(\sigma).
$$
Let $f\in \mathcal C_0^\infty(\R^n)$, $\sigma\in\R^+$ and $(p,q)$ satisfy \eqref{p_q}. Thanks to \eqref{uniform_Sobolev}, it suffices to deal with the second and third terms of the right hand side of \eqref{proof_theorem_1_2_1}.

For the second term, we choose $\widetilde p\ge2$ such that $1/\widetilde p-1/q=2/n$. Since $(\widetilde p,q)$ satisfies \eqref{p_q_0} and $V\in L^{n/2,\infty}$, we can use \eqref{uniform_Sobolev} to obtain
\begin{align*}
\|\mathcal R_0(\sigma)V\mathcal R_0(\sigma)f\|_{L^{q,2}}
&\lesssim \|V\mathcal R_0(\sigma)f\|_{L^{\widetilde p,2}}
\lesssim \|V\|_{L^{n/2,\infty}}\|\mathcal R_0(\sigma)f\|_{L^{q,2}}\\
&\lesssim |\sigma|^{\frac n2(\frac1p-\frac1q)-1}\|f\|_{L^{p,2}}.
\end{align*}

For the third part, we divide the proof into two cases: $1/p-1/q=2/n$ and otherwise.
Let us first suppose $1/p-1/q=2/n$. It is easy to see that
$$
\{(p,q)\ |\ (p,q)\ \text{satisfies}\ \eqref{p_q}\ \text{and}\ 1/p-1/q=2/n\}=\{(p_s,q_s)\ |\ 1-\mu_0<s<1+\mu_0\},
$$
where $p_s=\frac{2n}{n+2(2-s)},q_s=\frac{2n}{n-2s}$. By \eqref{uniform_Sobolev} and H\"older's inequality in Lorentz spaces,
\begin{align*}
\|\mathcal R_0(\sigma)w_1f\|_{L^{q_s,2}}&\lesssim \|w_1f\|_{L^{p_s,2}}\lesssim \|w_1\|_{L^{\frac{n}{2-s},\infty}}\|f\|_{L^2},\\
\|w_2\mathcal R_0(\sigma)f\|_{L^{2}}&\lesssim \|w_2\|_{L^{\frac ns,\infty}}\|\mathcal R_0(\sigma)f\|_{L^{q_s,2}}\lesssim  \|w_2\|_{L^{\frac ns,\infty}}\|f\|_{L^{p_s,2}}
\end{align*}
for all $w_1\in L^{\frac{n}{2-s},\infty}$, $w_2\in L^{\frac ns,\infty}$ and $1/2<s<3/2$. These two estimates, together with \eqref{in-local-sm1} and the fact $r^{-\alpha}\in L^{n/\alpha,\infty}$ and $r^2V\in L^\infty$, imply for $1-\mu_0<s<1+\mu_0$,
\begin{align*}
\|\mathcal R_0(\sigma)V\mathcal R(\sigma)V\mathcal R_0(\sigma)f\|_{L^{q_s,2}}
&\lesssim \|\mathcal R_0(\sigma)r^{-2+s}\|_{L^2\to L^{q_s,2}}\|r^{-s}\mathcal R(\sigma)V\mathcal R_0(\sigma)f\|_{L^2}\\
&\lesssim \|r^{-s}\mathcal R(\sigma)r^{-2+s}\|_{L^2\to L^2}\| r^{-s}\mathcal R_0(\sigma)f\|_{L^2}\\
&\lesssim \|f\|_{L^{p_s,2}},
\end{align*}
which completes the proof of \eqref{unf-sob} for the case when $1/p-1/q=2/n$.

Consider next the case when $2/(n+1)\leq 1/p-1/q< 2/n$. One can find a point $(p_0,q_0)$ satisfying $p_0<p$, $q<q_0$, \eqref{p_q} and $1/p_0-1/q_0=2/n$. Since $(p_0,q)$ and $(p,q_0)$ satisfy \eqref{p_q_0}, \eqref{uniform_Sobolev} and H\"older's inequality then show
\begin{align}\label{equ:es1}
\|\mathcal R_0(\sigma)Vf\|_{L^{q,2}}&\lesssim |\sigma|^{\frac n2(\frac{1}{p_0}-\frac1q)-1}\|Vf\|_{L^{p_0,2}}\lesssim |\sigma|^{\frac n2(\frac{1}{p_0}-\frac1q)-1}\|V\|_{L^{\frac n2,\infty}}\|f\|_{L^{q_0,2}},\\\label{equ:es2}
\|V\mathcal R_0(\sigma)f\|_{L^{p_0,2}}&\lesssim \|V\|_{L^{\frac n2,\infty}}\|\mathcal R_0(\sigma)f\|_{L^{q_0,2}}\lesssim |\sigma|^{\frac n2(\frac{1}{p}-\frac{1}{q_0})-1}\|V\|_{L^{\frac n2,\infty}}\|f\|_{L^{p,2}}.
\end{align}
Since
$$
\frac n2(\frac{1}{p_0}-\frac1q)-1+\frac n2(\frac{1}{p}-\frac1q_0)-1=\frac n2(\frac1p-\frac1q)+\frac n2(\frac{1}{p_0}-\frac{1}{q_0})-2=\frac n2(\frac1p-\frac1q)-1,
$$
the above two estimates \eqref{equ:es1}, \eqref{equ:es2} combined with \eqref{unf-sob} for $(p_0,q_0)$ proved just above, imply
\begin{align*}
\|\mathcal R_0(\sigma)V\mathcal R(\sigma)V\mathcal R_0(\sigma)f\|_{L^{q,2}}
&\lesssim \|R_0(\sigma)V\|_{L^{q_0,2}\to L^{q,2}}\|\mathcal R(\sigma)\|_{L^{p_0,2}\to L^{q_0,2}}\| V\mathcal R_0(\sigma)f\|_{L^{p_0,2}}\\
&\lesssim |\sigma|^{\frac n2(\frac{1}{p}-\frac{1}{q})-1}\|f\|_{L^{p,2}}.
\end{align*}
This completes the proof of Theorem \ref{thm-unf-sob}.

\subsection{The proof of Theorem \ref{thm-in}} We prove Theorem  \ref{thm-in} by using Theorem \ref{thm:w-resolvent} and the iterated Duhamel identity argument in \cite{BM1}.
 Recall $\mathcal{L}_V=-\Delta+V$ and $\mathcal{L}_0=-\Delta$, define the operators
\begin{equation}
\begin{split}
\mathcal{N}_0 F(t)=\int_0^t e^{i(t-s)\mathcal{L}_0} F(s) ds,\quad \mathcal{N} F(t)=\int_0^t e^{i(t-s)\mathcal{L}_V} F(s) ds.
\end{split}
\end{equation}
Setting  $u(t)=e^{i(t-s)\mathcal{L}_V} F(s)$, we can write
$$u(t)=e^{i(t-s)\mathcal{L}_0} F(s)-i\int_s^t e^{i(t-\tau)\mathcal{L}_0} \left(Ve^{-i(\tau-s)\mathcal{L}_V} F(s)\right) d\tau.$$
Integrating in $s\in[0,t]$, we have by Fubini's formula
\begin{equation*}
\begin{split}
 \mathcal{N} F(t)&=\int_0^t e^{i(t-s)\mathcal{L}_V} F(s) ds\\&=\int_0^t e^{i(t-s)\mathcal{L}_0} F(s) ds-i\int_0^t \int_s^t e^{i(t-\tau)\mathcal{L}_0} \left(Ve^{-i(\tau-s)\mathcal{L}_V} F(s)\right) d\tau ds
 \\&=\mathcal{N}_0 F(t)-i\int_0^t \int_0^\tau e^{i(t-\tau)\mathcal{L}_0} \left(Ve^{-i(\tau-s)\mathcal{L}_V} F(s)\right)  ds d\tau
 \\&=\mathcal{N}_0 F(t)-i\int_0^t e^{i(t-\tau)\mathcal{L}_0} \left( V \int_0^\tau e^{-i(\tau-s)\mathcal{L}_V} F(s) ds \right)  d\tau
  \\&=\mathcal{N}_0 F(t)-i\mathcal{N}_0 \left( V (\mathcal{N} F) \right)(t).
\end{split}
\end{equation*}
Therefore \begin{equation}\label{NF}
\begin{split}
 \mathcal{N} F(t)=\mathcal{N}_0 F(t)-i\mathcal{N}_0 \left( V (\mathcal{N} F) \right)(t).
\end{split}
\end{equation}
On the other hand,  by similar argument, we have
$$\mathcal{N}_0 F(t)=\mathcal{N} F(t)+i\mathcal{N} \left( V (\mathcal{N}_0 F) \right)(t),$$ hence $$\mathcal{N} F(t)=\mathcal{N}_0 F-i\mathcal{N} \left( V (\mathcal{N}_0 F) \right)(t).$$
Plugging it into \eqref{NF},  we obtain
\begin{equation*}
\begin{split}
 \mathcal{N} F=\mathcal{N}_0 F-i\mathcal{N}_0 \left( V (\mathcal{N}_0 F) \right)-\mathcal{N}_0 \left( V (\mathcal{N} (V (\mathcal{N}_0 F))) \right),
\end{split}
\end{equation*}
that is
\begin{equation}
\begin{split}
 \mathcal{N} F=\mathcal{N}_0 F-i\left( \mathcal{N}_0 V \mathcal{N}_0  \right)F-\mathcal{N}_0 (V \mathcal{N} V) \mathcal{N}_0 F.
\end{split}
\end{equation}
To prove \eqref{Str-est-in'}, we need to estimate
\begin{equation*}
\begin{split}
&\| \mathcal{N} F\|_{L^2(\R;L^{\frac{2n}{n-2s},2})}\\&
\leq \|\mathcal{N}_0 F\|_{L^2(\R;L^{\frac{2n}{n-2s},2})}+\|\left(\mathcal{N}_0 V \mathcal{N}_0  \right)F\|_{L^2(\R;L^{\frac{2n}{n-2s},2})}+\|\mathcal{N}_0 (V \mathcal{N} V) \mathcal{N}_0 F\|_{L^2(\R;L^{\frac{2n}{n-2s},2})}.
\end{split}
\end{equation*}
By the inhomogeneous Strichartz estimate in
\cite{Foschi,Koh,Vilela,Schippa} for the free Schr\"odinger equation,  we have
\begin{equation}\label{N-0}
\begin{split}
\|\mathcal{N}_0 F\|_{L^2(\R;L^{\frac{2n}{n-2s},2})}\lesssim \|F\|_{L^2(\R;L^{\frac{2n}{n+2(2-s)},2})},\quad \frac{n}{2(n-1)}\leq s\leq\frac{3n-4}{2(n-1)}.
\end{split}
\end{equation}
Since $V=r^{-2}V_0(\theta)$ with $V_0\in\mathcal{C}^\infty({\mathbb{S}^{n-1}})$,  one has $V\in L^{\frac n2,\infty}$. From the Strichartz estimate \eqref{N-0}, we obtain
\begin{equation}\label{N-1}
\begin{split}
&\|\left(\mathcal{N}_0 V \mathcal{N}_0  \right)F\|_{L^2(\R;L^{\frac{2n}{n-2s},2})}\lesssim \|V \mathcal{N}_0 F\|_{L^2(\R;L^{\frac{2n}{n+2(2-s)},2})}\\&\lesssim \|r^{-2}\|_{L^{\frac{n}{2},\infty}}\|\mathcal{N}_0 F\|_{L^2(\R;L^{\frac{2n}{n-2s},2})}
\lesssim \|F\|_{L^2(\R;L^{\frac{2n}{n+2(2-s)},2})}.
\end{split}
\end{equation}

To estimate the final term $\|\mathcal{N}_0 (V \mathcal{N} V) \mathcal{N}_0 F\|_{L^2(\R;L^{\frac{2n}{n-2s},2})}$,
we need two lemmas.
\begin{lemma}\label{lem:e} Let $s\in [n/2(n-1), (3n-4)/2(n-1)]$. Then
\begin{equation}
\|\mathcal{N}_0 r^{-2+s} F\|_{L^2(\R;L^{\frac{2n}{n-2s},2})}\lesssim \|F\|_{L^2(\R;L^{2})},
\end{equation}
and
\begin{equation}
\|r^{-s}\mathcal{N}_0 F\|_{L^2(\R;L^{2})}\lesssim \|F\|_{L^2(\R;L^{\frac{2n}{n+2(2-s)},2})}.
\end{equation}

\end{lemma}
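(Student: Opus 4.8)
The plan is to reduce both estimates of Lemma~\ref{lem:e} to a single estimate for the \emph{retarded} Duhamel operator. Put
\[
\mathcal{N}_0^{\mathrm{ret}}G(t)=\int_{-\infty}^{t}e^{i(t-\sigma)\mathcal{L}_0}G(\sigma)\,d\sigma,\qquad \widetilde{\mathcal{N}}_0 G(t)=\int_{t}^{\infty}e^{i(t-\sigma)\mathcal{L}_0}G(\sigma)\,d\sigma,
\]
and write $F_{\pm}=\mathbf{1}_{\{\pm\sigma>0\}}F$. Splitting $\int_0^t$ according to the sign of $t$, and using that $F_{+}$ vanishes for $\sigma<0$ while $F_{-}$ vanishes for $\sigma>0$, one gets the pointwise-in-time identity
\[
\mathcal{N}_0 F=\mathbf{1}_{[0,\infty)}\,\mathcal{N}_0^{\mathrm{ret}}F_{+}-\mathbf{1}_{(-\infty,0)}\,\widetilde{\mathcal{N}}_0 F_{-}.
\]
Since the time-reflection $\mathcal{T}\colon G(t)\mapsto \overline{G(-t)}$ is an isometry on every $L^2(\R;L^{p,2}(\R^n))$, commutes with multiplication by $r^{-\gamma}$, and satisfies $\widetilde{\mathcal{N}}_0=\mathcal{T}\mathcal{N}_0^{\mathrm{ret}}\mathcal{T}$, both pieces above obey the same weighted space--time bounds as $\mathcal{N}_0^{\mathrm{ret}}$. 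Dropping the cut-offs $\mathbf{1}_{[0,\infty)},\mathbf{1}_{(-\infty,0)}$ (which only decrease the relevant norms), the second estimate of Lemma~\ref{lem:e} then follows from
\[
\|r^{-\gamma}\mathcal{N}_0^{\mathrm{ret}}G\|_{L^2(\R;L^2)}\lesssim\|G\|_{L^2(\R;L^{\frac{2n}{n+2(2-\gamma)},2})}\qquad(\star)
\]
with $\gamma=s$, while the first estimate --- whose target norm is $L^{q,2}$ with $q=\tfrac{2n}{n-2s}$ --- follows from the same splitting together with the observation that the boundedness of $\mathcal{N}_0^{\mathrm{ret}}r^{s-2}\colon L^2(\R;L^2)\to L^2(\R;L^{q,2})$ is, by duality and again $\mathcal{T}$, equivalent to $(\star)$ with $\gamma=2-s$ (the adjoint of $\mathcal{N}_0^{\mathrm{ret}}$ is $\widetilde{\mathcal{N}}_0$, and $q'=\tfrac{2n}{n+2s}$). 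The interval $[\tfrac{n}{2(n-1)},\tfrac{3n-4}{2(n-1)}]$ in the statement is symmetric about $1$ and strictly contained in $(\tfrac12,\tfrac32)$, so both $s$ and $2-s$ lie in $(\tfrac12,\tfrac32)$, and it suffices to prove $(\star)$ for $\tfrac12<\gamma<\tfrac32$.

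To prove $(\star)$, note that $\mathcal{N}_0^{\mathrm{ret}}$ is convolution in $t$ against the operator-valued kernel $\mathbf{1}_{\{t>0\}}e^{it\mathcal{L}_0}$, whose time-Fourier transform equals, up to a unimodular constant, the resolvent $\mathcal{R}_0(\tau\mp i0)$ with $\mathcal{R}_0(\sigma)=(-\Delta-\sigma)^{-1}$ (the sign of the $i0$ and the constant being immaterial below). Because the image norm in $(\star)$ is $L^2$ in both variables, Plancherel in $t$ applies directly and gives
\[
\|r^{-\gamma}\mathcal{N}_0^{\mathrm{ret}}G\|_{L^2_tL^2_x}^2=\int_{\R}\big\|r^{-\gamma}\mathcal{R}_0(\tau\mp i0)\widehat{G}(\tau)\big\|_{L^2_x}^2\,d\tau\le\Big(\sup_{\tau\in\R}\|r^{-\gamma}\mathcal{R}_0(\tau\mp i0)\|_{L^{p,2}\to L^2}\Big)^{2}\int_{\R}\|\widehat{G}(\tau)\|_{L^{p,2}_x}^2\,d\tau,
\]
where $p=\tfrac{2n}{n+2(2-\gamma)}\in(1,2)$; moreover $\int_{\R}\|\widehat{G}(\tau)\|_{L^{p,2}_x}^2\,d\tau\lesssim\|G\|_{L^2_tL^{p,2}_x}^2$ by the generalised Minkowski inequality for the mixed Lorentz norm (valid since $p\le2$) together with fibrewise Plancherel. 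It remains to bound the uniform resolvent norm. By H\"older in Lorentz spaces, $\|r^{-\gamma}h\|_{L^2}\le\|r^{-\gamma}\|_{L^{n/\gamma,\infty}}\|h\|_{L^{q,2}}$ with $q=\tfrac{2n}{n-2\gamma}$, and the pair $(p,q)$ satisfies $\tfrac1p-\tfrac1q=\tfrac2n$, which for $\tfrac12<\gamma<\tfrac32$ lies in the Kenig--Ruiz--Sogge--Guti\'errez range \eqref{p_q_0}; hence \eqref{uniform_Sobolev}, whose power of $|\sigma|$ is then exactly $|\sigma|^{0}=1$ and which extends to the boundary values $\mathcal{R}_0(\tau\pm i0)$, $\tau\in\R$, yields $\sup_{\tau\in\R}\|\mathcal{R}_0(\tau\mp i0)\|_{L^{p,2}\to L^{q,2}}<\infty$. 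This gives $(\star)$, and hence Lemma~\ref{lem:e}.

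The conceptual heart of the argument --- and the place to be careful --- is the routing: $(\star)$ must be arranged so that its image norm is $L^2$ in space, since only then does Plancherel in $t$ convert the retarded Duhamel operator into the stationary resolvent and reduce the space--time bound to a uniform resolvent estimate; and the pair $(p,q)$ must be taken scaling-critical ($\tfrac1p-\tfrac1q=\tfrac2n$) so that the factor $|\sigma|^{\frac n2(\frac1p-\frac1q)-1}$ in \eqref{uniform_Sobolev} disappears, which is precisely what makes the $\tau$-integral converge uniformly. The main remaining checks are bookkeeping: that at the endpoints of the interval $[\tfrac{n}{2(n-1)},\tfrac{3n-4}{2(n-1)}]$ the shifted parameters $s$ and $2-s$ stay strictly inside $(\tfrac12,\tfrac32)$, so that the strict conditions on $p$ and $q$ in \eqref{p_q_0} hold, and the Minkowski interchange for mixed Lorentz--$L^2$ norms. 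I stress that the passage from the truncated operator $\mathcal{N}_0$ to $\mathcal{N}_0^{\mathrm{ret}}$ uses only the elementary $F_{+}/F_{-}$ decomposition above and, in particular, avoids the Christ--Kiselev lemma, which is unavailable here because both time exponents equal $2$.
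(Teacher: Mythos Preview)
Your reduction of $\mathcal N_0$ to the retarded operator via the $F_\pm$ splitting, and the duality linking the two estimates, are correct. The gap is in the proof of $(\star)$, at the vector-valued Plancherel step. You assert
\[
\int_{\R}\|\widehat{G}(\tau)\|_{L^{p,2}_x}^2\,d\tau\ \lesssim\ \|G\|_{L^2_tL^{p,2}_x}^2,\qquad p=\tfrac{2n}{n+2(2-\gamma)}<2,
\]
via ``generalised Minkowski plus fibrewise Plancherel''. For $p<2$, Minkowski indeed gives $\|\widehat G\|_{L^2_\tau L^{p,2}_x}\le \|\widehat G\|_{L^{p,2}_x L^2_\tau}$, and fibrewise Plancherel turns the right side into $\|G\|_{L^{p,2}_x L^2_t}$; but the remaining step $\|G\|_{L^{p,2}_x L^2_t}\le \|G\|_{L^2_t L^{p,2}_x}$ is Minkowski in the \emph{wrong} direction when $p<2$. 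The displayed inequality is in fact false: take $G(t,x)=\sum_{j=1}^N\chi(t-T_j)\,\mathbf 1_{E_j}(x)$ with $\chi$ a fixed bump, the $T_j$ widely separated, and $E_j\subset\R^n$ disjoint sets of unit measure. Then $\|G\|_{L^2_tL^{p}_x}\sim N^{1/2}$, whereas $\widehat G(\tau,x)=\widehat\chi(\tau)\sum_j e^{-iT_j\tau}\mathbf 1_{E_j}(x)$ yields $\|\widehat G\|_{L^2_\tau L^{p}_x}\sim N^{1/p}$, and $N^{1/p}/N^{1/2}\to\infty$ for $p<2$. (Abstractly: boundedness of the Fourier transform on $L^2(\R;X)$ forces $X$ to be isomorphic to a Hilbert space, by Kwapie\'n's theorem; this fails for $X=L^{p,2}$ with $p\neq2$.)

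The paper avoids this issue entirely and proceeds much more directly: it simply quotes the free non-admissible inhomogeneous Strichartz estimate \eqref{N-0} from \cite{Foschi,Koh,Vilela,Schippa} and absorbs the weights by H\"older in Lorentz spaces,
\[
\|\mathcal N_0 r^{-2+s}F\|_{L^2_tL^{\frac{2n}{n-2s},2}}\lesssim\|r^{-2+s}F\|_{L^2_tL^{\frac{2n}{n+2(2-s)},2}}\lesssim\|r^{-2+s}\|_{L^{\frac{n}{2-s},\infty}}\|F\|_{L^2_tL^2},
\]
and dually for the second estimate. Your resolvent route would, at best, rederive a special case of \eqref{N-0}; but that estimate is not a consequence of the uniform Sobolev inequality \eqref{uniform_Sobolev} via Plancherel when the spatial input is not $L^2$, and genuinely requires the dispersive/bilinear machinery of \cite{Foschi,Vilela,Koh,Schippa}.
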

\begin{proof}
It follows from \eqref{N-0} and the H\"older inequality that
\begin{equation}
\begin{split}
\|\mathcal{N}_0 r^{-2+s} F\|_{L^2(\R;L^{\frac{2n}{n-2s},2})}&\lesssim \|r^{-2+s}F\|_{L^2(\R;L^{\frac{2n}{n+2(2-s)},2})}\\&\lesssim \|r^{-2+s}\|_{L^{\frac{n}{2-s},\infty}}\|F\|_{L^2(\R;L^{2})}\lesssim \|F\|_{L^2(\R;L^{2})}
\end{split}
\end{equation}
and
\begin{equation}
\|r^{-s}\mathcal{N}_0 F\|_{L^2(\R;L^{2})}\lesssim \|r^{-s}\|_{L^{\frac{n}{s},\infty}}\|\mathcal{N}_0  F\|_{L^2(\R;L^{\frac{2n}{n-2s},2})}\lesssim \|F\|_{L^2(\R;L^{\frac{2n}{n+2(2-s)},2})}.
\end{equation}
\end{proof}

\begin{lemma}\label{lem:h} Let $\alpha\in R_{\nu_0}$ be defined in \eqref{Rv}, then we have
\begin{equation}\label{in-local-sm}
\|r^{-\alpha}\int_0^t e^{i(t-s)\mathcal{L}_V}r^{-2+\alpha} F ds\|_{L^2(\R;L^{2})}\leq C\|F\|_{L^2(\R;L^{2})}.
\end{equation}
\end{lemma}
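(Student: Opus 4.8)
The plan is to deduce the space-time estimate \eqref{in-local-sm} from the uniform weighted resolvent estimate \eqref{in-local-sm1} of Theorem \ref{thm:w-resolvent} by a $TT^*$-type / Kato smoothing argument, exactly as was done in \cite{BPST,BM1} for the case $\alpha=1$. First I would observe that \eqref{in-local-sm} is equivalent to the statement that the operator $r^{-\alpha}$ is $\mathcal{L}_V$-smooth relative to the weight $r^{-2+\alpha}$ in the inhomogeneous sense; more precisely, by the Christ--Kiselev lemma it suffices to prove the non-retarded (full-line) version
\begin{equation*}
\Big\|r^{-\alpha}\int_{\R} e^{i(t-s)\mathcal{L}_V}r^{-2+\alpha} F(s)\,ds\Big\|_{L^2(\R;L^2)}\leq C\|F\|_{L^2(\R;L^2)},
\end{equation*}
since the exponents $2$ on both sides of the time integration are equal and one can always perturb to strict inequality by the standard argument (or simply note that the pair is off-diagonal in the spatial variable, which is where smoothing is gained). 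Actually, because here both time exponents equal $2$, Christ--Kiselev does not literally apply, so instead I would argue directly: the retarded operator factors as $\mathcal{N}_V = (r^{-\alpha}\,\text{(smoothing)})\circ(\text{(smoothing)}\,r^{-2+\alpha})$ with a bounded intertwining piece, which is the route actually used in \cite{BM1}.

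The key step is the following resolvent-to-smoothing passage. Writing $e^{it\mathcal{L}_V}$ via the spectral theorem and using the Fourier transform in $t$, one has the identity, for the full-line propagator,
\begin{equation*}
\int_{\R} e^{-it\lambda}\,\widehat{F}(t)\,dt \quad\text{paired against}\quad \big((\mathcal{L}_V-\lambda-i0)^{-1}-(\mathcal{L}_V-\lambda+i0)^{-1}\big),
\end{equation*}
so that $\|r^{-\alpha}e^{it\mathcal{L}_V}r^{-\alpha}g\|_{L^2_tL^2_x}^2$ is controlled by $\sup_{\lambda\in\R,\,\varepsilon>0}\|r^{-\alpha}(\mathcal{L}_V-\lambda-i\varepsilon)^{-1}r^{-\alpha}\|_{L^2\to L^2}$ times $\|g\|^2$ — this is the standard Kato--Yajima argument. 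However, the weights in \eqref{in-local-sm1} are $r^{-\alpha}$ on the left and $r^{-2+\alpha}$ on the right, which are \emph{not} symmetric; so I would instead run the $TT^*$ argument for the bilinear form
\begin{equation*}
\big(F,G\big)\mapsto \int_{\R}\int_{\R}\big\langle r^{-\alpha}e^{i(t-s)\mathcal{L}_V}r^{-2+\alpha}F(s),\,G(t)\big\rangle\,ds\,dt,
\end{equation*}
expand $e^{i(t-s)\mathcal{L}_V}$ by Stone's formula as a contour integral of the resolvent, and bound the resulting expression by $\sup_{\sigma\notin\R^+}\|r^{-\alpha}(\mathcal{L}_V-\sigma)^{-1}r^{-2+\alpha}\|_{L^2\to L^2}\cdot\|F\|_{L^2_tL^2_x}\|G\|_{L^2_tL^2_x}$, using Plancherel in $t$ to convert the $ds\,dt$ double integral into an integral over the spectral parameter. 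Since $\alpha\in R_{\nu_0}$ is precisely the hypothesis under which \eqref{in-local-sm1} holds, and since the two weights $r^{-\alpha}$, $r^{-2+\alpha}$ are dual to each other in the sense that their product is $r^{-2}=r^{-\alpha}\cdot r^{-2+\alpha}$ with $\alpha+(2-\alpha)=2$ matching the order of the "$\delta$-like" commutator term, the bilinear estimate closes.

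The main obstacle I anticipate is the \emph{asymmetry of the weights}: the clean Kato-smoothing machinery is usually stated for a single weight $w$ with $\|w(\mathcal{L}-\sigma)^{-1}w\|$ uniformly bounded, whereas here we genuinely need the two-weight version $\|r^{-\alpha}(\mathcal{L}_V-\sigma)^{-1}r^{-2+\alpha}\|$. One must therefore be careful that the $TT^*$ factorization produces matching weights on the two factors: schematically $\mathcal{N}_V = A\,B$ where $A = r^{-\alpha}(\text{retarded propagator})\,r^{\alpha-1}$ and $B = r^{1-\alpha}(\text{propagator})\,r^{-2+\alpha}$ — but this introduces the intermediate weight $r^{\pm(1-\alpha)}$, and one needs $A$ and $B^*$ to each be $L^2_{t,x}\to L^2_{t,x}$ bounded, which requires the symmetric smoothing bounds $\|r^{-\alpha}(\mathcal{L}_V-\sigma)^{-1}r^{-\alpha}\|$ and $\|r^{-2+\alpha}(\mathcal{L}_V-\sigma)^{-1}r^{-2+\alpha}\|$ — and the latter, with $2-\alpha$ possibly less than $1/2$, is exactly the borderline regime governed by $\nu_0$. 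The resolution is to not split symmetrically but to use the off-diagonal estimate \eqref{in-local-sm1} directly inside the quadratic form after one application of Cauchy--Schwarz in $t$, together with the already-established duality statement $\|r^{-2+\alpha}(\mathcal{L}_V-\sigma)^{-1}r^{-\alpha}\|\leq C$ (noted in the proof of Theorem \ref{thm:w-resolvent}); then the factorization through the free flow, controlled by Lemma \ref{lem:e}, and the Duhamel identity $\mathcal{N}_V = \mathcal{N}_0 - i\mathcal{N}_0(V\mathcal{N}_V)$ already derived above, allow one to bootstrap. I would also need to confirm that the endpoint $L^2_t$ integrability is legitimate here — this is the one place where Christ--Kiselev fails, but since \eqref{in-local-sm1} is a genuine resolvent bound (not merely a local smoothing estimate) the full-line $L^2_tL^2_x\to L^2_tL^2_x$ bound for the \emph{non-retarded} operator follows directly, and the retarded version is then obtained because the weight $r^{-2}$ makes the kernel "positive-definite enough" — or, more honestly, one cites the abstract retarded-estimate lemma from \cite{BM1} which handles precisely this situation.
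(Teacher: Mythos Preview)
Your bottom line---cite an abstract ``resolvent bound $\Rightarrow$ retarded smoothing'' lemma and feed in \eqref{in-local-sm1}---is exactly what the paper does; its proof is the single sentence ``This is a consequence of D'Ancona's proof \cite{Da} and the weighted resolvent estimate \eqref{in-local-sm1}.'' The correct reference is D'Ancona \cite{Da}, not \cite{BM1}.

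Two points worth clarifying in your exploration. First, your worry about the asymmetric weights forcing a $TT^*$ factorization with matching symmetric bounds is unfounded, and this is precisely what D'Ancona's argument bypasses. For $F$ supported in $\{t\ge0\}$ the retarded Duhamel integral is the time-convolution of $r^{-2+\alpha}F$ with $\chi_{[0,\infty)}(t)e^{it\mathcal L_V}$, whose temporal Fourier transform (after an $e^{-\epsilon t}$ regularization) is $i(\mathcal L_V-\lambda+i\epsilon)^{-1}$, a \emph{single} boundary value of the resolvent rather than the Stone jump. Plancherel in $t$ then gives directly
\[
\big\|r^{-\alpha}\mathcal N\,r^{-2+\alpha}F\big\|_{L^2_tL^2_x}
\le \sup_{\sigma\notin\R^+}\big\|r^{-\alpha}(\mathcal L_V-\sigma)^{-1}r^{-2+\alpha}\big\|_{L^2\to L^2}\,\|F\|_{L^2_tL^2_x},
\]
with no need for either Christ--Kiselev or symmetric-weight estimates. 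Second, your proposed fallback of bootstrapping via the Duhamel identity $\mathcal N=\mathcal N_0-i\mathcal N_0(V\mathcal N)$ together with Lemma~\ref{lem:e} would be circular: to close it you would have to already control $r^{-\alpha}\mathcal N$ in $L^2_{t,x}$ on the right-hand side, which is the very estimate you are trying to prove.
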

\begin{proof} This is a consequence of   D'Ancona's proof \cite{Da} and the weighted resolvent estimate \eqref{in-local-sm1}.
\end{proof}

\vspace{0.2cm}

Note that $r^2V=V_0(\theta)\in L^\infty(\mathbb{S}^{n-1})$. By using Lemma \ref{lem:e} and Lemma \ref{lem:h}, since $s\in A_{\nu_0}$, we prove
\begin{equation}\label{N-2}
\begin{split}
&\|\mathcal{N}_0 (V \mathcal{N} V) \mathcal{N}_0 F\|_{L^2(\R;L^{\frac{2n}{n-2s},2})}\\&=\|(\mathcal{N}_0 r^{-2+s})(r^2V) (r^{-s}\mathcal{N}r^{-2+s})(r^2V)(r^{-s} \mathcal{N}_0) F\|_{L^2(\R;L^{\frac{2n}{n-2s},2})}
\\&\lesssim \| (r^{-s}\mathcal{N}r^{-2+s})(r^2V)(r^{-s} \mathcal{N}_0) F\|_{L^2(\R;L^{2})}\lesssim  \|(r^{-s} \mathcal{N}_0) F\|_{L^2(\R;L^{2})}\\&\lesssim \|F\|_{L^2(\R;L^{\frac{2n}{n+2(2-s)},2})}.\end{split}
\end{equation}
Finally we collect \eqref{N-0}, \eqref{N-1} and \eqref{N-2} to obtain \eqref{Str-est-in'}. Thus we prove Theorem \ref{thm-in}.

\begin{center}

\end{center}


\begin{thebibliography}{99}
\addcontentsline{toc}{section}{References}


\bibitem{BD} V. Banica and T. Duyckaerts, Weighted Strichartz estimates for radial Schr\"odinger equation on noncompact manifolds, Dyn. Partial Differ. Equ. 4 (2007), 335-359.

\bibitem{BG} M. Beceanu and M. Goldberg, Schr\"odinger dispersive estimates for a scaling-critical class of potentials, Comm. Math. Phys. 314 (2012), 471-481.

\bibitem{BH} J.  Bony and D. H\"after, Low frequency resolvent estimates for long range perturbations of the Euclidean Laplacian, Math. Res. Lett. 17(2010), 301-306.

\bibitem{BM1} J. Bouclet and H. Mizutani, Uniform resolvent and Strichartz estimates for Schr\"odinger equations with scaling critical potentials,
Trans. Amer. Math. Soc. 370 (2018), 7293-7333.

\bibitem{BR} J.M. Bouclet and J. Royer, Sharp low frequency resolvent estimates on asymptotically conical manifolds, Commun. Math. Phys.,
335(2015) 809-850.

\bibitem{BPSS} N. Burq, F. Planchon, J. Stalker and A. S.
Tahvildar-Zadeh, Strichartz estimates for the wave and Schr\"odinger
equations with the inverse-square potential, J. Funct. Anal., 203
(2003), 519-549.

\bibitem{BPST} N. Burq, F. Planchon, J. Stalker and A. S. Tahvildar-Zadeh, Strichartz estimates for the wave and {S}chr\"odinger equations with potentials of critical decay,
Indiana Univ. Math. J. 53 (2004), 1665-1680.

\bibitem{CW} T. Cazenave and F. B. Weissler, Rapidly decaying solutions of nonlinear Schr\"odinger equation, Comm. Math. Phys. 147(1992), 75-100.

\bibitem{Da} P. D'Ancona, Kato smoothing and Strichartz estimates for wave equations with magnetic potentials, Commun. Math. Phys. 335 (2015), 1-16.

\bibitem{DFVV} P. D'Ancona, L. Fanelli, L. Vega, and N. Visciglia, Endpoint Strichartz estimates for the magnetic Schr\"odinger equation, J. Funct. Anal. 258 (2010), 3227-3240.

\bibitem{D} T. Duyckaerts, A singular critical potential for the Schr\"odinger operator, Canad. Math. Bull.
50 (2007), 35-47.

\bibitem{FFFP} L. Fanelli, V. Felli, M. A. Fontelos, and A. Primo, Time decay of scaling invariant electromagnetic Schr\"odinger equations on the plane, Comm. Math. Phys. 337 (2015), 1515-1533.

\bibitem{Foschi} D. Foschi, Inhomogeneous Strichartz estimates, J. Hyper. Diff. Equ. 2(2005),1-24.


\bibitem{GV} J. Ginibre, and G. Velo, The global Cauchy problem for the nonlinear Schr\"odinger equation revisited, Ann. Inst. H. Poincar\'e Anal. Non Lin\'eaire, 2 (1985), 309-327.


\bibitem{G} M. Goldberg, Strichartz estimates for the Schr\"odinger equation with time-periodic $L^{n/2}$ potentials, J. Funct. Anal. 256 (2009), 718-746.

\bibitem{GS} M. Goldberg and W. Schlag, A limiting absorption principle for the three-dimensional
Schr\"odinger equation with $L^p$ potentials, Int. Math. Res. Not. 75 (2004), 4049-4071.


\bibitem{GVV} M. Goldberg, L. Vega, and N. Visciglia, Counterexamples of Strichartz inequalities for
Schr\"odinger equations with repulsive potentials, Int. Math. Res. Not. (2006), Art. ID 13927,
16.


\bibitem{GH} C. Guillarmou, A. Hassell, Uniform Sobolev estimates for non-trapping metrics, Journal of
Inst. Math. Jussieu, 13 (2014), 599-632.

\bibitem{Gut}
S. Guti\'errez, Non trivial $L^q$ solutions to the Ginzburg-Landau equation, Math. Ann. 328 (2004), 1--25

 \bibitem{Harmse}  K. Harmse, On Lebesgue space estimates for the wave equation, Indiana Univ. Math.
J. 39 (1990) 229-248.

\bibitem{HW} A. Hassell and J. Wunsch, The Schr\"odinger propagator for scattering metrics, Ann. of Math. 162(2005),  487-523.

\bibitem{HZ} A. Hassell and J. Zhang, Global-in-time Strichartz estimates on nontrapping asymptotically conic manifolds,
Analysis \& PDE, 9(2016), 151-192.

\bibitem{IoSc}
A. Ionescu and W. Schlag, Agmon-Kato-Kuroda theorems for a large class of perturbations, Duke Math. J. 131 (2006), 397--440.

\bibitem{JK} A. Jensen and T. Kato, Spectral properties of Schr\"odinger operators and time-decay of the wave functions, Duke Math. J. 46 (1979), 583-611.
%

\bibitem{JSS} J. Journ\'e, A. Soffer and C. Sogge, Decay estimates for Schr\"odinger operators, Comm. Pure Appl. Math. 44 (1991), 573-604.


\bibitem{Kato} T. Kato. Wave operators and similarity for some non-selfadjoint operators. Math. Ann., 162(1965), 258-279.
%
%

\bibitem{KSWW} H. Kalf, U. W. Schmincke, J. Walter, and R. W\"ust, On the spectral theory of Schr\"odinger and Dirac operators with strongly singular potentials. In
\emph{Spectral theory and differential equations}. 182-226. Lect. Notes in Math. 448 (1975) Springer, Berlin.

\bibitem{KT} M. Keel and T. Tao, Endpoint Strichartz estimates,
Amer. J. Math., 120(1998), 955-980.

\bibitem{KRS} C. Kenig, A. Ruiz and C. Sogge, Uniform Sobolev inequalities and unique continuation for second order constant coefficient differential operators, Duke Math. J. 55(1987), 329-347.

\bibitem{Koh} Y. Koh, Improved inhomogeneous Strichartz estimates for the Sch\"odinger equation, J. Math. Anal. Appl. 373(2011), 147-160.

\bibitem{Miz1} H. Mizutani,  Eigenvalue bounds for non-self-adjoint Schr\"odinger operators with the inverse-square potential, J. Spectral Theory. 9(2019), 677-709.

\bibitem{Miz} H. Mizutani, Uniform Sobolev estimates for Schr\"odinger operators with scaling-critical potentials and applications, to appear in Analysis $\&$ PDE, arXiv:1609.03253V5.
%


\bibitem{Oberlin} D. Oberlin, Convolution estimates for some distributions with singularities on the light cone, Duke Math. J. 59 (1989) 747-757.

\bibitem{PSS} F. Planchon, J. Stalker and A. S. Tahvildar-Zadeh, $L^p$ estimates for the wave equation with the inverse-square potential,
Discrete Contin. Dynam. Systems, 9(2003), 427-442.

\bibitem{PSS1} F. Planchon, J. Stalker and A. S. Tahvildar-Zadeh, Dispersive estimate for the wave equation with the inverse-square potential.
Discrete Contin. Dynam. Systems, 9(2003), 1387-1400.

\bibitem{RS}  I. Rodnianski and W. Schlag, Time decay for solutions of Schr\"odinger equations with rough and time-dependent potentials, Invent. Math. 155(2004):451-513.

\bibitem{RT} I. Rodnianski and T. Tao, Effective limiting absorption principles, and applications, Comm. Math. Phys. 333 (2015), 1-95.

\bibitem{Schippa} R. Schippa, Generalized inhomogeneous Strichartz estimates, DCDS-A, 37(2017),  3387-3410.


\bibitem{Str} R. Strichartz, Restrictions of Fourier transforms to quadratic surfaces and decay of solutions of wave
equations, Duke. Math. J., 44 (1977),  705-714.


\bibitem{Taggart} R. Taggart, Inhomogeneous Strichartz estimates, Forum Math. 22(2010), 825-853.


\bibitem{Tbook} T. Tao, Nonlinear Dispersive Equations, Local and Global Analysis,
CBMS Reg. Conf. Ser. Math., vol. 106, Amer. Math. Soc., Providence,
RI, ISBN: 0-8218-4143-2, 2006, published for the Conference Board of
the Mathematical Science, Washington, DC.

\bibitem{VZ} J. Vazquez and E. Zuazua, The Hardy inequality and the asymptotic behaviour of the heat equation with
an inverse-square potential, J. Funct. Anal. 173(2000), 103-153.

\bibitem{Vilela} M. Vilela, Inhomogeneous Strichartz estimates for the Schr\"odinger equation, Trans. Amer. Math. Soc. 359(2007), 2123-2136.

 \bibitem{ZZ}  J. Zhang and J. Zheng, Global-in-time Strichartz estimates and cubic
 Schr\"odinger equation in a conical singular space, arXiv: 1702.05813.

 \bibitem{ZZ1}  J. Zhang and J. Zheng, Global-in-time Strichartz estimate on scattering manifold, Commu. in PDE, 42(2017), 1962-1981.
 
  \bibitem{ZZ2}  J. Zhang and J. Zheng, Strichartz estimates and wave equation in a conic singular space, Math. Ann. DOI 10.1007/s00208-019-01892-7.


\end{thebibliography}
\end{document}